\newtheorem{lemma}{Lemma}[section]
\newtheorem{theorem}[lemma]{Theorem}
\newtheorem{proposition}[lemma]{Proposition}
\numberwithin{equation}{section}
\title{\textsf{Automorphism
groups of the finite dimensional special odd Hamiltonian
superalgebras in prime characteristic}}
\author{\textsc{Liming Tang$^{1,2}$ \textsc{and}
    \textsc{Wende Liu$^{1,2,}$}}\footnote{Correspondence:  wendeliu@ustc.edu.cn (W. Liu), limingtang@hrbnu.edu.cn (L. Tang)
 } \footnote{Supported by  the NSF
  of China (10871057) and the NSF of Heilongjiang Province, China (A200802)}\;\; \;
  \\
  \\
  \textit{$^{1}$Department of Mathematics},
  \textit{Harbin Institute of Technology}\\
  \textit{Harbin 150006, China}\\
\\
  \ \ \textit{$^{2}$School of Mathematical Sciences},
  \textit{Harbin Normal University} \\
  \textit{Harbin 150025, China}}
\date{ }
\begin{document}
\maketitle
\begin{quotation}
\small\noindent \textbf{Abstract}: This paper is devoted to a study
of the automorphism groups of three series of finite dimensional
special odd Hamiltonian superalgebras $\mathfrak{g}$ over a field of
prime characteristic. Our aim is to characterize the connections
between the automorphism groups of $\mathfrak{g}$ and the
automorphism groups of the corresponding underlying superalgebras.
Precisely speaking, we embed the former into the later. Moreover, we
determine the images of the normal series of the automorphism groups
and homogeneous automorphism groups of $\mathfrak{g.}$

\vspace{0.2cm} \noindent{\textbf{Keywords}}: Special odd Hamiltonian
superalgebras; Automorphism groups

\vspace{0.1cm} \noindent \textbf{Mathematics Subject Classification
2000}: 17B50, 17B40

%\noindent %\textbf{Mathematics Subject Classification 2000}: 17B50,
%17B40
\end{quotation}
 \setcounter{section}{-1}
\section{Introduction}

As is well known, the classification problem is still open for
finite dimensional simple module Lie superalgebras (see \cite{bl})
and the simple graded Lie superalgebras of Cartan type are quite
useful in attempts to classify finite dimensional simple Lie
superalgebras over a field of prime characteristic (see
\cite{FJZ,lh,lz,lz2}, for example). The automorphism groups of
Cartan type Lie superalgebras play a vital role in future studies on
structures and representations of Lie superalgebras of Cartan type
(see \cite{lz, lz2}, for example). In \cite{lz}, the automorphism
groups were determined for finite dimensional restricted modular Lie
superalgebras $W,$ $ S,$ $ H,$ $ K.$ In \cite{lz2}, the automorphism
groups of finite dimensional restricted odd Hamiltonian
superalgebras $HO$ were given.

  The purpose of this paper is to discuss the automorphism groups
  of restricted special odd Hamiltonian superalgebras in prime
  characteristic. Our methods are modeled on those used by \cite{lz}.
   There is a little difference from those previous papers:
   We use more general methods to deal with three series of finite dimensional
special odd Hamiltonian superalgebras $\mathfrak{g}$ containing
non-simple cases. In additional, one should stress that some
conclusions are general which provide information for investigating
other problems of Lie superalgebras.

\section{Basics}
Let us first introduce the notation which will be used throughout
this paper. Quite generally, we adopt the conventions of \cite{lz}.
In the following, $\mathbb{F}$ is a field of characteristic $p>3.$
Let $\mathcal{O}(m;\underline{t})$ be the \textit{divided power
algebra} over $\mathbb{F}$ with basis $\{x^{(\alpha)}\mid\alpha\in
\mathbb{A}\}\,\ \mbox{where}\,\
\mathbb{A}:=\{\alpha\in\mathbb{N}^{m}\mid\alpha_{i}\leq\pi_{i}\}.$
Let $\Lambda(m)$ be the \textit{exterior superalgebra} over
$\mathbb{F}$ of $m$ variables $ x_{m+1},x_{m+2},\ldots,x_{2m}.$ Let
$$\mathbb{B}:=\{\langle i_{1},i_{2},\ldots,i_{k}\rangle \mid m+1\leq
i_{1}<i_{2}<\cdots<i_{k}\leq 2m, 1\leq k\leq m\}.$$  For $u:=\langle
i_{1},i_{2},\ldots,i_{k}\rangle \in\mathbb{B},$ set $|u|:=k$ and
write $x^{u}:=x_{i_{1}}x_{i_{2}} \cdots x_{i_{k}}.$ The tensor
product $\mathcal{O}(m,m;\underline{t}):=\mathcal
{O}(m;\underline{t})\otimes\Lambda(m)$
 is an associative superalgebra. Note that $\mathcal{O}(m,m;\underline{t})$ has a
standard $\mathbb{F}$-basis $\{x^{(\alpha)}x^{u}\mid
(\alpha,u)\in\mathbb{A}\times\mathbb{B}\}.$ For convenience, put
$$\mathbf{I}_{0}:={1,2,\ldots,m};\,\ \mathbf{I}_{1}:={m+1,\ldots,2m},\,\ \mbox{and}\,\
\mathbf{I}=\mathbf{I}_{0}\cup \mathbf{I}_{1}.$$ Let $D_{r}$ be the
superderivation of $\mathcal{O}(m,m;\underline{t})$ such that
$$D_{r}(x^{(\alpha)})=x^{(\alpha-\varepsilon_{r})}\,\ \mbox{for}\,\ r\in
\mathbf{I}_{0}\,\ \mbox{and}\,\ D_{r}(x_{s})=\delta_{rs}\,\
\mbox{for}\,\ r,s\in \mathbf{I}.$$ The generalized Witt superalgebra
$W(m,m;\underline{t})$ is a free
$\mathcal{O}(m,m;\underline{t})$-module with basis $\{D_{r}\mid r\in
\mathbf{I}\}.$  In particular, $W(m,m;\underline{t})$ has a standard
$\mathbb{F}$-basis $\{x^{(\alpha)}x^{u}D_{r}\mid (\alpha,u,r)\in
\mathbb{A}\times \mathbb{B}\times \mathbf{I}\}.$ Note that
$\mathcal{O}(m,m;\underline{t})$ possesses a standard
$\mathbb{Z}$-grading structure
$\mathcal{O}(m,m;\underline{t}):=\bigoplus_{r=0}^{\xi}\mathcal{O}(m,m;\underline{t})_{[r]}$
by letting
$$\mathcal{O}(m,m;\underline{t})_{[r]}:={\rm{span}}_{\mathbb{F}}\{x^{(\alpha)}x^{u}\mid
|\alpha|+|u|=r\},\,\ \xi£»=|\pi|+m=\sum_{i\in
\mathbf{Y}_{0}}p^{t_{i}}.$$ This induces naturally a
$\mathbb{Z}$-grading structure of
$W(m,m;\underline{t})=\bigoplus_{i={-1}}^{\xi-1}W(m,m;\underline{t})_{[i]}$
where
$$W(m,m;\underline{t})_{[i]}:={\rm{span}}_{\mathbb{F}}\{fD_{r}\mid f\in \mathcal{O}(m,m;\underline{t})_{[i+1]}, r\in
\mathbf{I}\}.$$We usually simplify the notion and write
$\mathcal{O}$ and $W$
     instead of
    $\mathcal{O}(m,m;\underline{t})$ and $W(m,m;\underline{t}),$
    respectively.
 For a vector superspace $V=V_{\overline{0}}\bigoplus
V_{\overline{1}},$ we write ${\rm{p}}(x):=\theta$ for the parity of
a homogeneous element $x\in V_{\theta},\,\ \theta\in
\mathbb{Z}_{2}.$ We assume throughout that the symbol
$\mathrm{p}(x)$ implies  $x$ is $\mathbb{Z}_{2}$-homogeneous. Now
let $\mathscr{A}=\bigoplus_{i\in \mathbb{Z}}\mathscr{A}_{[i]}$ be a
$\mathbb{Z}$-graded superalgebra. Write
$\mathscr{A}_{m}=\bigoplus_{i\geq{m}}\mathscr{A}_{[i]}$ for all
$i\in \mathbb{Z}.$ In the remainder of this paper, the filtration
$(\mathscr{A}_{m})_{m\in \mathbb{Z}}$ associated with the given
$\mathbb{Z}$-grading of $\mathscr{A}$ is called the standard
filtration of $\mathscr{A}.$ If $L$ is a $\mathbb{Z}$-graded Lie
superalgebra, then the grading $(L_{[i]})_{i\in \mathbb{Z}}$ is
called $\textit{transitive}$ if $\{A\in L_{[i]}\mid
[A,L_{[-1]}]={0}\}=\{0\}$ for all $i\geq 0.$

      Take the involution of $\mathbf{I}$ satisfying that $i'=i+m$ for
      $i\in \mathbf{I}_{0}$ and $i'=i-m$ for $i\in \mathbf{I}_{1}.$ Define a linear operator
      ${\rm{T_{H}}}:\mathcal{O}\longrightarrow W$
      such that
      $${\rm{T_{H}}}(a):=\sum_{i\in
      \mathbf{I}}(-1)^{{\rm{p}}(D_{i}){\rm{p}}(a)}D_{i}(a)D_{i^{'}}
    \,\ \mbox{for}\,\ a\in\mathcal{O}.$$
   Note that ${\rm{T_{H}}}$ is odd and that
      $$\big[{\rm{T_{H}}}(a),{\rm{T_{H}}}(b)\big]={\rm{T_{H}}}{\rm{\big(T_{H}}}(a)(b)\big)
    \,\ \mbox{for
      all}
      \,\ a,b\in\mathcal{O}.$$
      Then $$HO(m,m;\underline{t}):=\{{\rm{T_{H}}}(a)\mid
      a\in\mathcal{O}\}$$ is a finite dimensional
      simple Lie superalgebra, which is called the\textit{ odd Hamiltonian
      superalgebra} (see \cite{lz2}). Let $${\rm{div}}:W\longrightarrow
      \mathcal{O}$$ be the divergence, which is a linear mapping, such that
      $${\rm{div}}(f_{r}D_{r})=(-1)^{{\rm{p}}(D_{r}){\rm{p}}(f_{r})}D_{r}(f_{r}).$$
      Note that ${\rm{div}}$ is an even superderivation of
      $W$ into the module
      $\mathcal{O}.$ Put $$
      S'(m,m;\underline{t}):=\{D\in W\mid
      {\rm{div}}(D)=0\}.$$
      Then $S'(m,m;\underline{t})$ is a $\mathbb{Z}$-graded subalgebra of
      $W.$ Its derived algebra is a simple Lie
      superalgebra, which is called the \textit{special
      superalgebra}. Let
      \begin{eqnarray*}
      SHO'(m,m;\underline{t})&:=&S'(m,m;\underline{t})\cap
      HO(m,m;\underline{t}),\\
      \overline{SHO}(m,m;\underline{t})&:=&[SHO'(m,m;\underline{t}),
      SHO'(m,m;\underline{t})],\\
      SHO(m,m;\underline{t})&:=&[\overline{SHO}(m,m;\underline{t}),
      \overline{SHO}(m,m;\underline{t})].
      \end{eqnarray*}
      We call these algebras the \textit{special odd Hamiltonian
      superalgebras} (see \cite{lh}). In the below, $\mathfrak{g}(m,m;\underline{t})$ will always be abbreviated to $\mathfrak{g},$ where $\mathfrak{g}= SHO',$
$\overline{SHO}$ or $SHO,$ unless other stated.
      A straightforward verification shows that $\mathfrak{g}$ is a $\mathbb{Z}$-graded subalgebra of $W,$ and \begin{eqnarray*}\mathfrak{g}&=&\bigoplus_{i\geq-1}\mathfrak{g}_{[i]},
      \,\ \mbox{where}\,\ \mathfrak{g}_{[i]}=\mathfrak{g}\cap W_{[i]}.
      \end{eqnarray*}

\section{Automorphism groups}\label{section2}
  We begin by introducing the necessary definitions concerning
  automorphism groups.

$\bullet$ \textit{Admissible automorphism groups}

Let $\mathscr{A}$ be a finite dimensional superalgebra over
$\mathbb{F}$ and $\mathcal{Q}$ a sub Lie superalgebra of the full
superderivation superalgebra ${\rm{Der}}\mathscr{A}.$ Put
$${\rm{Aut}}(\mathscr{A}:\mathcal{Q}):=\{\sigma\in {\rm{Aut}}\mathscr{A}\mid \widetilde{\sigma}(\mathcal{Q})\subset \mathcal{Q}\},$$
where $\widetilde{\sigma}(D):=\sigma D\sigma^{-1}$ for $D\in
\mathcal{Q}.$ Then ${\rm{Aut}}(\mathscr{A}:\mathcal{Q})$ is a
subgroup of ${\rm{Aut}}\mathscr{A},$ which is referred to as the
admissible automorphism group of $\mathscr{A}$ (to $\mathcal{Q}$).

$\bullet$ \textit{Homogeneous automorphism groups}

Denote by $\mathscr{A}$ a $\mathbb{Z}$-graded superalgebra. A
$\mathbb{Z}$-grading of $\mathscr{A}$ is a family of sub superspaces
$(\mathscr{A}_{[j]})_{j\in \mathbb{Z}}$ satisfying
$\mathscr{A}=\bigoplus_{i\in \mathbb{Z}}\mathscr{A}_{[i]}$ and
$\mathscr{A}_{[i]}\mathscr{A}_{[j]}\subset \mathscr{A}_{[i+j]}$. Set
$$
{\rm{Aut}}^{*}\mathscr{A}:=\{\sigma \in {\rm{Aut}}\mathscr{A} \mid
\sigma(\mathscr{A}_{[j]})\subset \mathscr{A}_{[j]},\forall j\in
\mathbb{Z}\}.$$ The subgroup ${\rm{Aut}}^{*}\mathscr{A}$ of
${\rm{Aut}}\mathscr{A}$ is called the homogeneous automorphism group
of $\mathscr{A}.$

$\bullet$ \textit{Standard normal series of ${\rm{Aut}}\mathscr{A}$}

Let $\mathscr{A}$ be a $\mathbb{Z}$-graded superalgebra. If the
standard filtration of $\mathscr{A}$ is invariant under automorphism
of $\mathscr{A}$, we have occasion to construct the standard normal
series of ${\rm{Aut}}\mathscr{A}.$ Put
 $${\rm{Aut}}_{i}\mathscr{A}:=\{\sigma\in {\rm{Aut}}\mathscr{A}\mid (\sigma-1)(\mathscr{A}_{j})\subset
\mathscr{A}_{i+j}, \forall j\in \mathbb{Z}\},\,\ i\geq0.$$ Then
${\rm{Aut}}_{i}\mathscr{A}$ is a normal subgroup of
${\rm{Aut}}\mathscr{A}$ for each $i\geq0$. Moreover, we call
${\rm{Aut}}_{0}\mathscr{A}> {\rm{Aut}}_{1}\mathscr{A}> \cdots $ the
standard normal series of ${\rm{Aut}}\mathscr{A}.$

$\bullet$ \textit{Homogeneous admissible automorphism groups}

 Let
$${\rm{Aut}}^{*}(\mathcal{O}:\mathfrak{g}):={\rm{Aut}}^{*}\mathcal{O}\cap
{\rm{Aut}}(\mathcal{O}:\mathfrak{g}).$$ We call
${\rm{Aut}}^{*}(\mathcal{O}:\mathfrak{g})$ a homogeneous admissible
automorphism group of $\mathcal{O}.$

$\bullet$ \textit{Standard normal series of
${\rm{Aut}}(\mathcal{O}:\mathfrak{g})$}

Put
$${\rm{Aut}}_{i}(\mathcal{O}:\mathfrak{g}):={\rm{Aut}}_{i}\mathcal{O}\cap
{\rm{Aut}}(\mathcal{O}:\mathfrak{g}),\,\ i\geq 0.$$ According to
Lemma \ref{zz1}, the standard filtration of $\mathcal{O}$ is
invariant under ${\rm{Aut}}(\mathcal{O}:\mathfrak{g}).$ It is
natural to define the standard normal series of
${\rm{Aut}}(\mathcal{O}:\mathfrak{g}),$ that is
$${\rm{Aut}}_{0}(\mathcal{O}:\mathfrak{g})> {\rm{Aut}}_{1}(\mathcal{O}:\mathfrak{g})> \cdots.$$

Our object is to review the connections between the automorphism
groups of $\mathfrak{g}$ and the automorphism groups of the
corresponding underlying superalgebra $\mathcal{O}.$ Let us define a
mapping,
\begin{eqnarray*}
\Phi:{\rm{Aut}}(\mathcal{O}:\mathfrak{g})&\longrightarrow&
{\rm{Aut}}\mathfrak{g}\\
\sigma&\longmapsto&\widetilde{\sigma}\mid_{\mathfrak{g}}
   \end{eqnarray*}
 where $\widetilde{\sigma}(D):=\sigma
D\sigma^{-1}$ for $D\in \mathfrak{g}.$ In the following, we are
going to study in detail the automorphism groups via the mapping. To
the aim, we require several lemmas.

\begin{lemma}\label{zz2} {\rm{(see \cite{lh})}} The following properties hold:
 \begin{eqnarray*}
 &{\rm{(1)}}&
  SHO'=\overline{SHO}\bigoplus{\rm{span}}_{\mathbb{F}}\{{\rm{T_{H}}}(x^{(\alpha)}x^{u})\mid
I(\alpha,u)=\widetilde{I}(\alpha,u)=\varnothing\};\\
 &{\rm{(2)}}&
\overline{SHO}=SHO\bigoplus
 {\rm{span}}_{\mathbb{F}} \big\{\big[{\rm{T_{H}}}(x^{(\pi)}),{\rm{T_{H}}}(x^{\omega})\big]\big\};\\
 &{\rm{(3)}}&
 \overline{SHO}=\bigoplus^{\xi-4}_{i=-1}\overline{SHO}_{[i]},\,\ SHO=\bigoplus^{\xi-5}_{i=-1}SHO_{[i]},\,\ \xi=\sum_{i\in
 Y_{0}}p^{t_{i}};\\
&{\rm{(4)}}&
  \overline{SHO}_{[i]}=SHO_{[i]},\,\
  \overline{SHO}_{[i]}=\big[\overline{SHO}_{[-1]},\overline{SHO}_{[i+1]}\big];\\
&{\rm{(5)}}&
  \overline{SHO}_{[\xi-4]}=SHO'_{[\xi-4]}=\mathbb{F}\big[{\rm{T_{H}}}(x^{(\pi)}),{\rm{T_{H}}}(x^{\omega})\big],
  \end{eqnarray*}
 in particular,
\rm{dim}$\overline{SHO}_{[\xi-4]}$= \rm{dim}$SHO'_{[\xi-4]}$= 1.
\end{lemma}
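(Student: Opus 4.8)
The plan is to transport every assertion across the odd operator ${\rm{T_{H}}}$ into an explicit computation inside $\mathcal{O}$. First I would make the description of $SHO'=S'\cap HO$ concrete. Splitting the defining sum over $\mathbf{I}_{0}\cup\mathbf{I}_{1}$ gives ${\rm{T_{H}}}(a)=\sum_{i\in\mathbf{I}_{0}}\big(D_{i}(a)D_{i'}+(-1)^{{\rm{p}}(a)}D_{i'}(a)D_{i}\big)$, and applying ${\rm{div}}(fD_{r})=(-1)^{{\rm{p}}(D_{r}){\rm{p}}(f)}D_{r}(f)$ together with the fact that the even operator $D_{i}$ and the odd operator $D_{i'}$ commute yields
\[ {\rm{div}}\big({\rm{T_{H}}}(a)\big)=2(-1)^{{\rm{p}}(a)}\Delta(a),\qquad \Delta:=\sum_{i\in\mathbf{I}_{0}}D_{i}D_{i'}. \]
Since $p>3$ the factor $2$ is invertible, so ${\rm{T_{H}}}(a)\in S'$ iff $\Delta(a)=0$; hence $SHO'={\rm{T_{H}}}(\ker\Delta)$ and ${\rm{T_{H}}}$ induces a Lie superalgebra isomorphism $(\ker\Delta)/\mathbb{F}1\xrightarrow{\sim}SHO'$ for the bracket $\langle a,b\rangle:={\rm{T_{H}}}(a)(b)$. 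Consequently $\overline{SHO}$ and $SHO$ are the ${\rm{T_{H}}}$-images of $\langle\ker\Delta,\ker\Delta\rangle$ and of its second derived space, so the entire lemma reduces to understanding these bracket-subspaces of $(\ker\Delta,\langle\,,\,\rangle)$.

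Next I would pass to the monomial basis $\{x^{(\alpha)}x^{u}\}$. Because $\Delta(x^{(\alpha)}x^{u})=\sum_{i:\,\alpha_{i}\geq 1,\ i'\in u}\pm\,x^{(\alpha-\varepsilon_{i})}x^{u\setminus\{i'\}}$, the operator $\Delta$ only pairs a divided-power index $i\in\mathbf{I}_{0}$ with its exterior partner $i'$; this coupling is exactly what the index sets $I(\alpha,u)$ and $\widetilde{I}(\alpha,u)$ bookkeep. I would write a basis of $\ker\Delta$ adapted to these sets and isolate the monomials with $I(\alpha,u)=\widetilde{I}(\alpha,u)=\varnothing$ as the candidate complement claimed in (1). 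The key mechanical tool is that bracketing with a degree $-1$ element is differentiation: since ${\rm{T_{H}}}(x_{j})=\pm D_{j'}$, one has $\langle x_{j},a\rangle=\pm D_{j'}(a)$, so $[{\rm{T_{H}}}(x_{j}),{\rm{T_{H}}}(a)]=\pm{\rm{T_{H}}}(D_{j'}(a))$ with $j'$ ranging over all of $\mathbf{I}$.

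With these tools the graded assertions follow in order. For (4), the inclusion $[\overline{SHO}_{[-1]},\overline{SHO}_{[i+1]}]\subseteq\overline{SHO}_{[i]}$ is immediate from the grading; for the reverse I would show that any degree-$(i+2)$ monomial of $\ker\Delta$ representing $\overline{SHO}_{[i]}$ is a sum of derivatives $D_{j'}(b)$ with $b$ a degree-$(i+3)$ element of $\ker\Delta$ lying in $\overline{SHO}_{[i+1]}$, by ``integrating'' in a suitable free variable while remaining in $\ker\Delta$. The same analysis gives $\overline{SHO}_{[i]}=SHO_{[i]}$ on the overlapping range and fixes the top degrees in (3). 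Statement (5) is then a direct top-degree computation: $\langle x^{(\pi)},x^{\omega}\rangle=\sum_{i\in\mathbf{I}_{0}}\pm\,x^{(\pi-\varepsilon_{i})}x^{\omega\setminus\{i'\}}$ is a nonzero element of $\ker\Delta$ of degree $\xi-2$, so $[{\rm{T_{H}}}(x^{(\pi)}),{\rm{T_{H}}}(x^{\omega})]={\rm{T_{H}}}(\langle x^{(\pi)},x^{\omega}\rangle)$ is a nonzero element of $\overline{SHO}_{[\xi-4]}$, and a count of $\ker\Delta\cap\mathcal{O}_{[\xi-2]}$ shows this space is one-dimensional. Finally (2) is the observation that $\overline{SHO}$ and $SHO$ coincide in every degree $\leq\xi-5$ by (4) and differ only in the top degree $\xi-4$, where the one-dimensional space of (5) is not attained by $[\overline{SHO},\overline{SHO}]$; and (1) is the identification of a complement of $\langle\ker\Delta,\ker\Delta\rangle$ in $\ker\Delta$ with the span of the ${\rm{T_{H}}}(x^{(\alpha)}x^{u})$ for which $I(\alpha,u)=\widetilde{I}(\alpha,u)=\varnothing$.

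The main obstacle is the combinatorics of these last two reductions. Proving (1) requires showing that the listed monomials form exactly a complement of the first derived space, and proving (5) and (2) requires showing that the single top class is genuinely not a bracket; both come down to deciding whether explicit divided-power monomials, with their signs, do or do not occur in iterated brackets, which is careful bookkeeping rather than a new idea. The transitivity argument for (4) is the structural linchpin that keeps this bookkeeping finite and degree-by-degree.
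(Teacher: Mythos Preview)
The paper does not prove this lemma at all: it is stated with the attribution ``(see \cite{lh})'' and no argument is given. Everything in Lemma~\ref{zz2} is imported wholesale from the Liu--He paper on $SHO$, so there is no in-paper proof against which to compare your attempt.

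That said, your outline is the right shape and matches the standard route taken in \cite{lh}. The reduction ${\rm div}\,{\rm T_{H}}(a)=2(-1)^{{\rm p}(a)}\Delta(a)$ with $\Delta=\sum_{i\in\mathbf{I}_{0}}D_{i}D_{i'}$ is correct, and identifying $SHO'$ with ${\rm T_{H}}(\ker\Delta)$ is exactly how one gets a workable monomial description. The use of $[{\rm T_{H}}(x_{j}),{\rm T_{H}}(a)]=\pm{\rm T_{H}}(D_{j'}(a))$ to run the transitivity/integration argument for (4) is also the standard mechanism.

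Where your write-up remains a sketch rather than a proof is precisely where you flag it: in (1) you have not actually exhibited the complement, only named it, and you would need to show both that the monomials with $I(\alpha,u)=\widetilde{I}(\alpha,u)=\varnothing$ are linearly independent modulo $\overline{SHO}$ and that together with $\overline{SHO}$ they span $SHO'$; in (5) you assert that $\ker\Delta\cap\mathcal{O}_{[\xi-2]}$ is one-dimensional without carrying out the count; and in (2) the claim that the top class is not a bracket of $\overline{SHO}$ with itself needs an explicit degree obstruction (the only way to reach degree $\xi-4$ as a bracket would require components you must show are absent). None of these are conceptual gaps---they are exactly the ``careful bookkeeping'' you mention---but they are the substance of the lemma, and in \cite{lh} they occupy several pages. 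If you want a self-contained proof you will have to write those computations out; otherwise, citing \cite{lh} as the present paper does is the honest option.
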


\begin{lemma}\label{zz1}
 The standard filtration of $X$ is invariant under automorphism of $X,$ where
 $X=\mathcal{O}, \mathfrak{g} .$
\end{lemma}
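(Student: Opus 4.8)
The plan is to reduce, in both cases, the invariance of the whole filtration to the invariance of a single term — the radical $\mathcal{O}_1$ when $X=\mathcal{O}$, and the non-negative part $\mathfrak{g}_0$ when $X=\mathfrak{g}$ — and then to bootstrap upward by structural recursions that are manifestly stable under automorphisms. Throughout I write $\mathcal{O}_i=\bigoplus_{j\ge i}\mathcal{O}_{[j]}$ and $\mathfrak{g}_i=\bigoplus_{j\ge i}\mathfrak{g}_{[j]}$ for the standard filtrations, and I note that the lowest nonzero degree of $\mathfrak{g}$ is $-1$, so $\mathfrak{g}_{-1}=\mathfrak{g}$ is trivially stable and only the terms $\mathfrak{g}_0,\mathfrak{g}_1,\dots$ need attention.

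For $X=\mathcal{O}$ the argument is short. First, $\mathcal{O}_1$ is precisely the set of non-invertible (equivalently, nilpotent) elements, i.e.\ the unique maximal ideal (the radical) of the local superalgebra $\mathcal{O}$, since $\mathcal{O}/\mathcal{O}_1\cong\mathbb{F}$; hence every automorphism $\sigma$ satisfies $\sigma(\mathcal{O}_1)=\mathcal{O}_1$. For the higher terms I use that an automorphism of the divided power algebra $\mathcal{O}$ preserves the divided powers $\gamma_a$ (part of the structure carried along under the conventions of \cite{lz}); since $\gamma_a$ multiplies the $\mathbb{Z}$-degree by $a$, one gets $\gamma_a(\mathcal{O}_1)\subseteq\mathcal{O}_a$. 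Any standard basis vector $x^{(\alpha)}x^{u}$ of $\mathcal{O}_i$ (so $|\alpha|+|u|\ge i$) is a product of divided powers $\gamma_{\alpha_j}(x_j)$ of the degree-one generators together with odd generators $x_{i_l}$; applying $\sigma$, each factor lands in $\mathcal{O}_{\alpha_j}$, resp.\ $\mathcal{O}_1$, so the product lies in $\mathcal{O}_{|\alpha|+|u|}\subseteq\mathcal{O}_i$. Thus $\sigma(\mathcal{O}_i)\subseteq\mathcal{O}_i$, and the same applied to $\sigma^{-1}$ gives equality.

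For $X=\mathfrak{g}$ (the substantive case) the goal is $\sigma(\mathfrak{g}_0)=\mathfrak{g}_0$ for every automorphism $\sigma$, after which all remaining terms follow by induction. Indeed, transitivity of the grading yields the intrinsic recursion
\[
\mathfrak{g}_{i+1}=\{x\in\mathfrak{g}_i\mid [x,\mathfrak{g}]\subseteq\mathfrak{g}_i\}\qquad(i\ge 0).
\]
To see this, if $x=\sum_{k\ge i}x_k\in\mathfrak{g}_i$ then $[x,\mathfrak{g}]\subseteq\mathfrak{g}_i$ forces the degree-$(i-1)$ component $[x_i,\mathfrak{g}_{[-1]}]$ to vanish, whence $x_i=0$ by transitivity, i.e.\ $x\in\mathfrak{g}_{i+1}$; the reverse inclusion is clear. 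Since the right-hand side is defined purely via the bracket and the previously established term, $\sigma(\mathfrak{g}_i)=\mathfrak{g}_i$ propagates from $i$ to $i+1$, and everything reduces to the base step.

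The main obstacle is exactly this base step: proving that $\mathfrak{g}_0$ is a characteristic subalgebra. My plan is to characterize $\mathfrak{g}_0$ intrinsically as a maximal subalgebra of minimal codimension $\dim\mathfrak{g}_{[-1]}=2m$; maximality should follow from transitivity together with the irreducibility of $\mathfrak{g}_{[-1]}$ as a $\mathfrak{g}_{[0]}$-module, after which one shows that $\mathfrak{g}_0$ is the \emph{unique} maximal subalgebra of this minimal codimension, so that the codimension-preserving action of $\operatorname{Aut}\mathfrak{g}$ on maximal subalgebras must fix it. The delicate point — where I expect the real work — is this uniqueness, which has to be extracted from the explicit form of $\mathrm{T_{H}}$ and the commutation rules, and checked separately for the three series $SHO'$, $\overline{SHO}$, $SHO$: the two non-simple cases carry the extra top-degree summands recorded in Lemma \ref{zz2}(1),(2),(5), so the codimension bookkeeping and the exclusion of exotic maximal subalgebras near the top degrees must be revisited case by case rather than quoted verbatim from the simple case.
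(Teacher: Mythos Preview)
The paper's own proof of this lemma is not an argument at all: it simply cites \cite{lz,hl} for $X=\mathcal{O}$ and $X=SHO'$, and says the remaining two cases $X=\overline{SHO},\,SHO$ go ``in the same manner'' as \cite[Theorem~2]{hl}. So the comparison is really between your sketch and the (unseen) content of those references, and your outline is the standard Weisfeiler-type strategy that such references typically implement. Your treatment of $X=\mathcal{O}$ is fine once one accepts the convention from \cite{lz} that automorphisms of $\mathcal{O}$ respect the divided-power maps; in the restricted case $\underline{t}=\underline{1}$ this is automatic anyway since $\gamma_a(y)=y^a/a!$ for $a<p$.

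For $X=\mathfrak{g}$, however, you have a genuine gap that you yourself flag: the reduction to the invariance of $\mathfrak{g}_0$ via the transitive recursion $\mathfrak{g}_{i+1}=\{x\in\mathfrak{g}_i:[x,\mathfrak{g}]\subset\mathfrak{g}_i\}$ is correct, but the base step---that $\mathfrak{g}_0$ is the \emph{unique} maximal subalgebra of minimal codimension, or some equivalent intrinsic characterization---is asserted only as a plan, not carried out. That uniqueness is precisely the substantive content the paper outsources to \cite{hl}; it is not a formality, and for the non-simple series $SHO'$ and $\overline{SHO}$ you would also need to verify that $\mathfrak{g}_{[-1]}$ is irreducible over $\mathfrak{g}_{[0]}$ (needed for maximality of $\mathfrak{g}_0$) and then rule out competing maximal subalgebras, with the top-degree summands from Lemma~\ref{zz2}(1),(2),(5) handled explicitly. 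Until that is done your argument for $\mathfrak{g}$ is a correct roadmap rather than a proof, and you should either complete the case analysis or, as the paper does, appeal to \cite{hl}.
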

\begin{proof}
For $X=SHO'$ or $\mathcal{O},$ the conclusion follows from
\cite{lz,hl}. For $X=\overline{SHO}$ or $SHO ,$ the proof is the
same manner as that of \cite[Theorem 2]{hl}.
\end{proof}

\begin{lemma}\label{zz3}
Let $\big(G,[p]\big)$ be a restricted Lie superalgebra. Assume $L$
is a subalgebra of $G$ with a sum of superspaces $L=[L,L]+ V.$ If
$[L,L]$ is a restricted subalgebra of $G$ and
$V_{{\overline{0}}}^{[p]}$ is contained in $L_{\overline{0}},$ then
$(L,[p])$ is
 restricted.
\end{lemma}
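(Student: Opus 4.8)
The plan is to verify directly that $L$ is closed under the $p$-map, i.e. that $x^{[p]}\in L$ for every $x\in L_{\overline{0}}$. Since the $p$-map of a restricted Lie superalgebra is defined only on the even part and makes $G_{\overline{0}}$ an ordinary restricted Lie algebra, the whole argument takes place inside $G_{\overline{0}}$. From the superspace decomposition $L=[L,L]+V$ one obtains $L_{\overline{0}}=[L,L]_{\overline{0}}+V_{\overline{0}}$, so an arbitrary even element may be written as $x=a+v$ with $a\in[L,L]_{\overline{0}}$ and $v\in V_{\overline{0}}$.

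The key tool I would invoke is Jacobson's formula for the $p$-power of a sum in the restricted Lie algebra $G_{\overline{0}}$:
\[
(a+v)^{[p]}=a^{[p]}+v^{[p]}+\sum_{i=1}^{p-1}s_{i}(a,v),
\]
where each $s_{i}(a,v)$ is a fixed Lie polynomial in $a$ and $v$ read off from the expansion of $({\rm ad}(ta+v))^{p-1}(a)$. The only feature needed is that for $1\le i\le p-1$ every $s_{i}(a,v)$ is a linear combination of iterated brackets, each containing at least one commutator of $a$ and $v$; consequently $s_{i}(a,v)\in[L,L]$, because $a,v\in L$ and $L$ is a subalgebra.

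Next I would check that each summand lies in $L$. The term $a^{[p]}$ lies in $[L,L]$ since $[L,L]$ is assumed to be a restricted subalgebra and $a\in[L,L]_{\overline{0}}$; the term $v^{[p]}$ lies in $L_{\overline{0}}$ by the hypothesis $V_{\overline{0}}^{[p]}\subset L_{\overline{0}}$; and each $s_{i}(a,v)$ lies in $[L,L]\subset L$ as noted above. Adding these terms shows $x^{[p]}=(a+v)^{[p]}\in L$, and since $x$ is even so is $x^{[p]}$, giving $L_{\overline{0}}^{[p]}\subset L_{\overline{0}}$. This is precisely the assertion that $(L,[p])$ is restricted.

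The one genuinely delicate point—and the step I would treat most carefully—is the structural claim about the Jacobson correction terms $s_{i}(a,v)$: one must be sure that for $1\le i\le p-1$ they contain no bracket-free monomials (no pure powers of $a$ alone or of $v$ alone), so that they really belong to the derived subalgebra $[L,L]$ and not merely to $L$. Once this is in hand, the remainder is a direct bookkeeping of the three hypotheses.
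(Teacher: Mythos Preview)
Your proof is correct and follows essentially the same route as the paper: write an even $x$ as $a+v$ with $a\in[L,L]_{\overline{0}}$ and $v\in V_{\overline{0}}$, apply Jacobson's formula, and observe that $a^{[p]}\in[L,L]$, $v^{[p]}\in L_{\overline{0}}$, and each $s_i(a,v)\in[L,L]$. Your write-up is in fact more explicit than the paper's about why the correction terms $s_i(a,v)$ land in $[L,L]$.
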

\begin{proof}
It is enough to prove $x^{[p]}\in L_{\overline{0}},$ for arbitrary
$x\in L_{\overline{0}}.$ Since $L=[L,L]\bigoplus V,$ we can express
$x=y+z,$ where $y\in [L,L]_{\overline{0}}, z\in V_{\overline{0}}.$
Because $x^{[p]}=y^{[p]}+z^{[p]}+\sum_{i=1}^{p-1}S_{i}(y,z),$ where
$\sum_{i=1}^{p-1}S_{i}(y,z)\in [L,L]_{\overline{0}},$  $z^{[p]}\in
L_{\overline{0}},$ we can conclude that $x^{[p]}\in
L_{\overline{0}}\,\ \mbox{for all}\,\ x\in L_{\overline{0}}.$
\end{proof}
The following conclusion is analogous to those of other Cartan type
Lie superalgebras (see \cite{lz, lz2}).

\begin{proposition}\label{zz4}
$\mathfrak{g}(m,m;\underline{t})$ is restricted if and only if
 $\underline{t}=\underline{1}.$
\end{proposition}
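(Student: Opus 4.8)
The plan is to work with the concrete candidate for the $p$-operation, the ordinary $p$-th power $x \mapsto x^{p}$ computed in the associative algebra $\mathrm{End}_{\mathbb F}(\mathcal O)$, every element of $W$ being a linear operator on $\mathcal O$. For an even homogeneous $x \in W_{[i]}$ the operator $x^{p}$ is again a superderivation, homogeneous of degree $pi$, so it lies in $W_{[pi]}$ (and is $0$ when that space vanishes). Since $x \mapsto x^{p}$ satisfies the Jacobson identities automatically and the mixed terms $s_{i}(x,y)$ are iterated brackets, proving $\mathfrak g$ restricted reduces to showing that $\mathfrak g_{\overline 0}$ is closed under $p$-th powers, and it is enough to test this on even \emph{homogeneous} elements. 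For the reverse direction I would first record that $\mathrm{Cent}_{\mathrm{Der}\,\mathcal O}(\mathfrak g) = 0$: writing $z = \sum_{s} h_{s} D_{s} \in \mathfrak g$ and using $D_{r} = \pm\,{\rm T_H}(x_{r'}) \in \mathfrak g$, any $w$ supercommuting with $\mathfrak g$ satisfies $[w, D_{r}] = 0$ for all $r$, whence $0 = [w, z] = \sum_{s} w(h_{s}) D_{s}$ forces $w$ to kill every coefficient function occurring in $\mathfrak g$; as these generate $\mathcal O$, the derivation $w$ vanishes. Consequently a restricted structure on $\mathfrak g$, if it exists, is unique and equals $x \mapsto x^{p}$, so in both directions ``restricted'' is equivalent to ``closed under $p$-th powers''.

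Necessity. Suppose $\underline t \neq \underline 1$, say $t_{j} \geq 2$ with $j \in \mathbf{I}_{0}$. Then $D_{j}$ is an even element of $\mathfrak g$: it equals $\pm\,{\rm T_H}(x_{j'}) \in HO$, it is divergence-free, and it lies in the degree $-1$ component shared by $SHO'$, $\overline{SHO}$ and $SHO$ (Lemma \ref{zz2}). I would then show $D_{j}^{\,p} \notin W$. Because $t_{j} \geq 2$ we have $p \leq \pi_{j}$, so $x^{(p\varepsilon_{j})} \in \mathcal O$ and $D_{j}^{\,p}(x^{(p\varepsilon_{j})}) = x^{(0)} = 1$. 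If we had $D_{j}^{\,p} = \sum_{r} f_{r} D_{r} \in W$, then evaluating at $x^{(p\varepsilon_{j})}$ would give $f_{j}\,x^{((p-1)\varepsilon_{j})} = 1$, which is impossible since $x^{((p-1)\varepsilon_{j})}$ is a non-unit of positive degree. Hence $D_{j}^{\,p} \notin W \supseteq \mathfrak g$; by the reduction, $\mathfrak g$ is not restricted.

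Sufficiency. Assume $\underline t = \underline 1$, so $\xi = mp$ and $W = W(m,m;\underline 1)$ is restricted (cf.\ \cite{lz}). For $SHO' = S' \cap HO$ I would invoke that both $S'$ and $HO$ are restricted subalgebras of $W$ in this case (see \cite{lz, lz2}); their intersection is then closed under $p$-th powers, so $SHO'$ is restricted. For $\overline{SHO}$ and $SHO$ I would argue by degrees. If $x \in SHO_{[i]}$ (resp.\ $\overline{SHO}_{[i]}$) is even, then $x^{p} \in SHO'_{[pi]}$ since $SHO'$ is $p$-closed; if $x^{p} \neq 0$ its degree $pi$ is a multiple of $p$ lying in $\{-1, 0, \ldots, \xi - 2\}$, and because $p \geq 5$ the only such multiples are $0, p, \ldots, (m-1)p$, all of which are $\leq (m-1)p = \xi - p \leq \xi - 5$. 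By Lemma \ref{zz2}(2),(4),(5) the algebras $SHO$, $\overline{SHO}$ and $SHO'$ coincide in all degrees $\leq \xi - 5$, differing only in degrees $\xi - 4, \xi - 3, \xi - 2$. Hence $x^{p} \in SHO_{[pi]}$ (resp.\ $\overline{SHO}_{[pi]}$), so both are $p$-closed and therefore restricted; equivalently, one may feed the complements, which sit in top degrees and so have vanishing $p$-th powers, into Lemma \ref{zz3}.

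The step I expect to be the real obstacle is this last degree descent: a priori $x^{p}$ for $x$ in the smaller algebra could escape into the top-degree complement separating $SHO'$ from $\overline{SHO}$ and $SHO$, and nothing formal rules this out. The resolution is arithmetic --- $\deg x^{p}$ is a multiple of $p$, while the three excess degrees $\xi-4, \xi-3, \xi-2$ are $\equiv -4, -3, -2 \pmod p$ and hence never multiples of $p$ once $p \geq 5$ --- and it depends on pinning down, through Lemma \ref{zz2}, exactly which degrees those complements occupy. The other point requiring genuine care is the centralizer computation $\mathrm{Cent}_{\mathrm{Der}\,\mathcal O}(\mathfrak g) = 0$, which is precisely what justifies replacing the abstract $p$-map by the explicit power $x \mapsto x^{p}$ in the necessity argument.
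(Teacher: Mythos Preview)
Your necessity argument and your treatment of $SHO'$ are both correct and interestingly different from the paper's. The paper argues necessity more cheaply: if $\mathfrak g$ were restricted then $(\mathrm{ad}\,D_i)^p=\mathrm{ad}(D_i^{[p]})$ would be inner, hence of $\mathbb Z$-degree $\geq -1$, while it visibly has degree $-p$; so it vanishes, contradicting $(\mathrm{ad}\,D_i)^p\,\mathrm{T_H}(x^{((p+1)\varepsilon_i)})\neq 0$. This avoids your centralizer computation entirely. (Your centralizer sketch is basically right, but ``these generate $\mathcal O$'' needs care: when $\underline t\neq\underline 1$ the $x_i$ alone do \emph{not} generate the divided power algebra, so you must really check that enough coefficient functions of elements of $\mathfrak g$ appear---e.g.\ the $x^{(p^k\varepsilon_i)}$ from $\mathrm{T_H}(x^{((p^k+1)\varepsilon_i)})$.) For $SHO'$ the paper goes in the opposite order: it first treats $SHO$ (by citation to \cite{bln}), then lifts to $\overline{SHO}$ and to $SHO'$ via Lemma~\ref{zz3}; your route through $S'\cap HO$ is a clean shortcut.

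There is, however, a genuine gap in your sufficiency argument for $SHO$ and $\overline{SHO}$. You assert, citing Lemma~\ref{zz2}(2),(4),(5), that $SHO$, $\overline{SHO}$ and $SHO'$ coincide in every degree $\leq \xi-5$. Those items only give $SHO_{[i]}=\overline{SHO}_{[i]}$ for $i\leq\xi-5$ and $\overline{SHO}_{[\xi-4]}=SHO'_{[\xi-4]}$; they say nothing about $SHO'_{[i]}$ for $i\leq\xi-5$. In fact Lemma~\ref{zz2}(1) exhibits a complement $V=\mathrm{span}\{\mathrm{T_H}(x^{(\alpha)}x^u):I(\alpha,u)=\widetilde I(\alpha,u)=\varnothing\}$ to $\overline{SHO}$ inside $SHO'$, and this complement is \emph{not} concentrated in top degrees---the paper's own proof computes $p$-th powers of its even elements case by case, and the nontrivial case occurs at $\alpha=\varepsilon_i$, $u=\langle i'\rangle$, i.e.\ in degree~$0$. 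Consequently your inference ``$x^p\in SHO'_{[pi]}$ with $pi\leq\xi-5$ $\Rightarrow$ $x^p\in SHO_{[pi]}$'' is unjustified, and your parenthetical fallback (``the complements sit in top degrees and so have vanishing $p$-th powers, feed into Lemma~\ref{zz3}'') fails for the same reason: the complement $V$ of $\overline{SHO}$ in $SHO'$ does not sit in top degrees, and in any case Lemma~\ref{zz3} runs from $[L,L]$ up to $L$, so it cannot produce restrictedness of $SHO$ from that of $SHO'$. The paper closes this by an external reference (\cite{bln}) for $SHO$ and then bootstraps upward via Lemma~\ref{zz3}; to make your argument self-contained you would need an independent reason why $x^p$ lands back in $SHO$ (or $\overline{SHO}$), not merely in $SHO'$.
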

\begin{proof}
Suppose $\mathfrak{g}(m,m;\underline{t})$ is restricted. Then
$({\rm{ad}}D_{i})^{p}$ are inner derivations of $\mathfrak{g}$ for
all $i\in \mathbf{I}_{0}.$ Hence ${\rm{zd}}({\rm{ad}}D_{i})^{p}\geq
-1.$ On the other hand, noticing that
${\rm{zd}}({\rm{ad}}D_{i})=-1,$ we obtain
${\rm{zd}}({\rm{ad}}D_{i})^{p}=-p.$ As a consequence,
$({\rm{ad}}D_{i})^{p}=0$ for all $i\in \mathbf{I}_{0}.$ Assert
$\underline{t}=\underline{1}.$ Otherwise
$\underline{t}>\underline{1},$ applying $({\rm{ad}}D_{i})^{p}$ to
${\rm{T_{H}}}(x^{(p+1)\varepsilon_{i}}) \in \mathfrak{g},$ we get
$({\rm{ad}}D_{i})^{p}\neq0,$ where $i\in \mathbf{I}_{0}.$ But this
violates $({\rm{ad}}D_{i})^{p}=0$ for all $i\in \mathbf{I}_{0}.$

 If $\underline{t}=\underline{1},$ then $W(m,m;\underline{1})$
is a restricted Lie superalgebra with respect to the usual
  $p$-mapping. (see \cite{Zh}).

  For $\mathfrak{g}=SHO,$ the conclusion follows directly from
  \cite{bln}. For $\mathfrak{g}=\overline{SHO},$ in view of Lemma \ref{zz2}(2),
for arbitrary $\overline{y}\in \overline{SHO}_{\overline{0}},$
$$\overline{y}=y +\lambda\big[{\rm{T_{H}}}(x^{(\pi)}),{\rm{T_{H}}}(x^{\omega})\big],$$
where $y\in SHO_{\overline{0}}, \lambda\in \mathbb{F},\,\
\mbox{and}\,\
\big[{\rm{T_{H}}}(x^{(\pi)}),{\rm{T_{H}}}(x^{\omega})\big]$ is even.
Thanks to Lemma \ref{zz3}, it remains only to observe that
$\big[{\rm{T_{H}}}(x^{(\pi)}),{\rm{T_{H}}}(x^{\omega})\big]^{p}$
does lie in $\overline{SHO}_{\overline{0}}.$ In fact, a direct
computation shows
$\big[{\rm{T_{H}}}(x^{(\pi)}),{\rm{T_{H}}}(x^{\omega})\big]^{p}=0.$

For $\mathfrak{g}=SHO',$ according to Lemma \ref{zz2}(1) and Lemma
\ref{zz3}, it remains only to consider that
${\rm{T_{H}}}(x^{(\alpha)}x^{u})^{p} \in SHO'_{\overline{0}}$ ,
where ${\rm{T_{H}}}(x^{(\alpha)}x^{u})$ is even and
$I(\alpha,u)=\widetilde{I}(\alpha,u)=\varnothing.$ Note that
\[{\rm{T_{H}}}(x^{(\alpha)}x^{u})^{p}=\left\{
    \begin{array}{cc}{\rm{T_{H}}}(x^{(\alpha)}x^{u})&\mbox{if $\alpha=\varepsilon_{i}$  and $u=<i'> $ for some $i\in
    Y_{0},$}\\0&\mbox{otherwise}.\end{array} \right.
    \]
\end{proof}

 For the rest of this section,  we shall restrict our attention to the
restrictedness case. Suppose $\mathfrak{g}$ is
   restricted, that is $\mathfrak{g}=\mathfrak{g}(m,m;\underline{1}),$ and
   correspondingly, $X:=X(m,m;\underline{1}),$ where $X=\mathcal{O}, W.$ In the following, let ${\rm{M}}_{2m}(\mathcal{O})$ be the
$\mathbb{F}$-algebra of all $2m\times2m$ matrices over
$\mathcal{O}.$ Denote by ${\rm{pr}}_{[0]}$ and ${\rm{pr}}_{1}$ the
projections of $\mathcal{O}$ onto $\mathcal{O}_{[0]}=\mathbb{F}$ and
$\mathcal{O}_{1},$ respectively. For $A=(a_{ij})\in
{\rm{M}}_{2m}(\mathcal{O}),$ put
${\rm{pr}}_{[0]}(A):=\big({\rm{pr}}_{[0]}(a_{ij})\big)$ and
${\rm{pr}}_{1}(A):=\big({\rm{pr}}_{1}(a_{ij})\big).$
\begin{lemma}\label{zz5}The following statements hold:

{\rm{(1) (see \cite{lz})}} Suppose that $\{E_{1},\ldots,E_{2m}\}$ is
an $\mathcal{O}$-basis of $W.$ Then
$$\big\{{\rm{pr}}_{[-1]}(E_{1}),\ldots,{\rm{pr}}_{[-1]}(E_{2m})\big\}$$ is an $\mathbb{F}$-basis of
$W_{[-1]},$ where ${\rm{pr}}_{[-1]}$ is the projection of $W$ onto
$W_{[-1].}$

{\rm{(2)}}
 Let $L=\bigoplus_{i\geq -1}L_{[i]}$ denote a
finite dimensional $\mathbb{Z}$-graded subalgebra of $W$ and
$L_{[-1]}=W_{[-1]}.$ Suppose $\phi\in {\rm{Aut}}L$ and $\phi$
preserves the standard filtration $($that is, $\phi(L_{j})\subset
L_{j},\,\ \mbox{for all}\,\ j\geq -1)$. If $\{G_{i}\mid i\in
\mathbf{I}\}\subset L$ is an $\mathcal{O}$-basis of $W,$ so is
$\{\phi(G_{i})\mid i \in \mathbf{I}\}.$

 {\rm{(3)}} Let  $L=\bigoplus_{i\geq -1}L_{[i]}$ denote a
finite dimensional transitive $\mathbb{Z}$-graded subalgebra of $W$
and $L_{[-1]}=W_{[-1]}.$ Suppose $\sigma,\,\ \tau \in{\rm{Aut}}L$
  and $\sigma$, $\tau$ preserve the standard filtration of $L$. If
  $\sigma\mid_{L_{[-1]}}=\tau\mid_{L_{[-1]}},$ then $\sigma=\tau.$
 \end{lemma}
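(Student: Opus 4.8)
The plan is to replace the pair $(\sigma,\tau)$ by the single automorphism $\rho:=\tau^{-1}\sigma$ and reduce the statement to a rigidity assertion. Since $\sigma$ and $\tau$ both preserve the standard filtration and are bijective, so does $\tau^{-1}$ and hence $\rho$; and since $\sigma\mid_{L_{[-1]}}=\tau\mid_{L_{[-1]}}$, for every $D\in L_{[-1]}$ we get $\rho(D)=\tau^{-1}\sigma(D)=\tau^{-1}\tau(D)=D$. Thus it suffices to prove the following: if $\rho\in{\rm{Aut}}L$ preserves the standard filtration and restricts to the identity on $L_{[-1]}$, then $\rho=\mathrm{id}$. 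The key structural input I would record first is that such a $\rho$ commutes with the right adjoint action in degree $-1$: writing $r_{D}(A):=[A,D]$ for $D\in L_{[-1]}$, bracket-preservation together with $\rho(D)=D$ gives $\rho(r_{D}(A))=[\rho(A),\rho(D)]=r_{D}(\rho(A))$, that is $\rho\, r_{D}=r_{D}\,\rho$, and hence $\mu\, r_{D}=r_{D}\,\mu$ for $\mu:=\rho-\mathrm{id}$.

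Next I would run an induction on the amount by which $\mu$ raises the filtration. Because $\rho$ preserves the filtration, $\mu(L_{j})\subseteq L_{j}$ for all $j$; the aim is to upgrade this to $\mu(L_{j})\subseteq L_{j+n}$ for every $n\ge 0$, after which the finite-dimensionality of $L$ (so $L_{j+n}=0$ once $n$ is large) forces $\mu=0$. Assuming $\mu(L_{j})\subseteq L_{j+n}$ for all $j$, I would, for homogeneous $A\in L_{[i]}$, define the leading term $\bar{\mu}(A)\in L_{[i+n]}$ to be the degree-$(i+n)$ component of $\mu(A)\in L_{i+n}$. Comparing the lowest-degree components on each side of the identity $\mu(r_{D}(A))=r_{D}(\mu(A))$—using that $r_{D}$ maps $L_{[k]}$ into $L_{[k-1]}$—then yields $\bar{\mu}(r_{D}(A))=r_{D}(\bar{\mu}(A))$, so the degree-raising operator $\bar{\mu}$ again commutes with every $r_{D}$.

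To complete the inductive step I would show $\bar{\mu}=0$ by an inner induction on the grading degree $i\ge -1$. The base case $i=-1$ holds since $\mu(L_{[-1]})=0$, whence $\bar{\mu}\mid_{L_{[-1]}}=0$. For $i\ge 0$, assuming $\bar{\mu}$ vanishes on all $L_{[j]}$ with $j<i$, I take $A\in L_{[i]}$ and $D\in L_{[-1]}$; then $\bar{\mu}([A,D])=[\bar{\mu}(A),D]$, and the left-hand side is $0$ because $[A,D]\in L_{[i-1]}$. Hence $[\bar{\mu}(A),L_{[-1]}]=0$ with $\bar{\mu}(A)\in L_{[i+n]}$ and $i+n\ge 0$, so transitivity of the grading forces $\bar{\mu}(A)=0$. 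Therefore $\bar{\mu}=0$, which means $\mu(L_{j})\subseteq L_{j+n+1}$, closing the induction and giving $\rho=\mathrm{id}$, i.e. $\sigma=\tau$.

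The only genuinely delicate point I anticipate is the bookkeeping of leading terms: one must verify that the degree-$(i+n-1)$ component of $r_{D}(\mu(A))$ arises solely from $r_{D}$ applied to the leading term $\bar{\mu}(A)$, which is precisely where the homogeneity of $r_{D}$ (degree $-1$) is used, and that the outer induction genuinely terminates because the grading is bounded above. Everything else—the reduction to $\rho$, the commutation $\mu\, r_{D}=r_{D}\,\mu$, and the single invocation of transitivity in the inner step—is routine, so I expect no obstacle beyond organizing the two nested inductions cleanly.
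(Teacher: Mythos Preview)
Your proposal addresses only part~(3); the lemma also contains part~(2), which the paper proves separately (part~(1) is a citation). You should at least note that part~(2) requires its own argument: the paper shows that the transition matrix $A$ from $\{D_i\}$ to $\{\phi(G_i)\}$ has invertible constant part ${\rm pr}_{[0]}(A)$ by passing to the induced isomorphism $\overline{\phi}$ on $L/L_0\cong L_{[-1]}$, and then invokes a standard fact that a matrix over $\mathcal{O}$ with invertible constant part is invertible.

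For part~(3) your argument is correct, and it is organized differently from the paper's. The paper works directly with $\sigma-\tau$ and runs a single induction on the degree $k$: from $\sigma([E,D_i])=\tau([E,D_i])$ for $E\in L_{[k]}$ it deduces $[(\sigma-\tau)(E),\sigma(D_i)]=0$ and then appeals to transitivity and filtration invariance to conclude $(\sigma-\tau)(E)=0$. That last step is terse, since $\sigma(D_i)$ need not lie in $L_{[-1]}$, and one must implicitly pass to leading terms to invoke transitivity. Your reduction to $\rho=\tau^{-1}\sigma$ neatly sidesteps this: because $\rho$ fixes $L_{[-1]}$ pointwise, the operators $r_D$ you commute $\mu$ through are honest degree~$-1$ maps, so the leading-term bookkeeping is transparent. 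The price is a nested induction (outer on the filtration shift $n$, inner on the degree $i$) rather than the paper's single loop, but each step is elementary and the termination is clear from finite-dimensionality. In short: same underlying mechanism (transitivity kills anything in nonnegative degree centralizing $L_{[-1]}$), but your packaging is cleaner at the cost of being longer.
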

 \begin{proof}
 {\rm(2)} By assumption, $\phi$ may induce an $\mathbb{F}$-isomorphism
 of the quotient space
$$\overline{\phi}: L/L_{0}\longrightarrow L/L_{0},
\,\ {\rm {dim}} L/L_{0}=2m.$$
  Denote by $\overline{G_{i}}$ the image $G_{i}$ under the canonical map
  $L \longrightarrow L/L_{0}.$ Then $\{\overline{G_{i}}\mid i\in \mathbf{I}\}$ is an
  $\mathbb{F}$-basis of $L/L_{0}.$ Suppose
  $$(\phi(G_{1}),\ldots,\phi(G_{2m}))^{{\rm{T}}}=A(D_{1},\ldots, D_{2m})^{{\rm{T}}},$$
  where $A\in {\rm{M}}_{2m}(\mathcal{O}).$ It follows that
  $$(\phi(G_{1}),\ldots,\phi(G_{2m}))^{{\rm{T}}}={\rm{pr}}_{[0]}(A)(D_{1},\ldots, D_{2m})^{{\rm{T}}}+{\rm{pr}}_{1}(A)(D_{1},\ldots,D_{2m})^{{\rm{T}}}.$$
Since $W_{[-1]}=L_{[-1]},$ we have
$$(\overline{\phi}(\overline{G_{1}},\ldots,\overline{G_{2m}}))^{{\rm{T}}}=(\overline{\phi(G_{1})},\ldots, \overline{\phi(G_{2m})})^{{\rm{T}}}={\rm{pr}}_{[0]}(A)(\overline{D_{1}},\ldots, \overline{D_{2m}})^{{\rm{T}}}.$$
This implies that ${\rm{pr}}_{[0]}(A)\in {\rm{GL}}(m,m).$ From
\cite[Lemma 2]{lz}, we know $A$ is invertible and therefore
$\{\phi(G_{i})\mid i\in \mathbf{I}\}$ is an $\mathcal{O}$-basis of
$W.$

 {\rm{(3)}} Use induction on $k,$ the case $k=-1$ being $\sigma|_{L_{[-1]}}=\tau|_{L_{[-1]}}.$ For all $E\in L_{[k]},D_{i}\in L_{[-1]},$  we have $$\big[\sigma(E), \sigma(D_{i})\big]=\big[\tau(E), \tau(D_{i})\big]\in L_{[k-1]},$$
that is , $\big[(\sigma-\tau)(E), \sigma(D_{i})\big]=0,$ for all
$i\in \mathbf{I}.$ Since $L$ is transitive and the standard
filtration of $L$ is invariant under automorphism of $L,$ we have
$(\sigma-\tau)(E)=0,$ namely, $\sigma=\tau.$

 \end{proof}

 \begin{lemma}\label{zz6}
 Suppose that $\phi \in {\rm{Aut}}\mathfrak{g}.$ Then there exist $y_{j}\in \mathcal{O}_{1}$
 and ${\rm{p}}(y_{j})=\mu(j)$ such that
 $\phi(D_{i})(y_{j})=\delta_{ij}$ for
 $i,j\in \mathbf{I}.$ Furthermore, the matrix
 $\big(\phi(D_{i})(y_{j})\big)_{i,j\in \mathbf{I}}$ is invertible.
\end{lemma}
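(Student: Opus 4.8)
The plan is to realize the $y_{j}$ as the ``dual coordinates'' of the frame $\{\phi(D_{i})\}$, so the whole argument rests on first showing that this frame is a super-commuting $\mathcal{O}$-basis of $W$ whose members are all divergence-free. By Lemma~\ref{zz1} the automorphism $\phi$ preserves the standard filtration of $\mathfrak{g}$, and since $\mathfrak{g}_{[-1]}=W_{[-1]}$ the elements $D_{i}$ ($i\in\mathbf{I}$) lie in $\mathfrak{g}$ and form an $\mathcal{O}$-basis of $W$. Hence Lemma~\ref{zz5}(2), applied to $L=\mathfrak{g}$, shows that $\{\phi(D_{i})\mid i\in\mathbf{I}\}$ is again an $\mathcal{O}$-basis of $W$. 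Two further properties come for free: since the coordinate derivations super-commute, $[\phi(D_{i}),\phi(D_{j})]=\phi([D_{i},D_{j}])=0$, so the frame is super-commuting; and since $\phi(D_{i})\in\mathfrak{g}\subset S'$, each member is divergence-free, i.e. ${\rm div}(\phi(D_{i}))=0$.

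Next I would carry out the reduction. Write $\phi(D_{i})=\sum_{k\in\mathbf{I}}a_{ik}D_{k}$ with $A=(a_{ik})\in {\rm M}_{2m}(\mathcal{O})$. By Lemma~\ref{zz5}(1) the scalar part $C={\rm pr}_{[0]}(A)$ is exactly the coefficient matrix of $\{{\rm pr}_{[-1]}(\phi(D_{i}))\}$ in the basis $\{D_{k}\}$ of $W_{[-1]}$, so $C\in {\rm GL}(m,m)$; by \cite[Lemma~2]{lz} this forces $A$ to be invertible over $\mathcal{O}$. Put $B=(b_{kj})=A^{-1}$. Using $\phi(D_{i})(y)=\sum_{k}a_{ik}D_{k}(y)$, a direct computation (the super-signs being routine) shows that the desired identity $\phi(D_{i})(y_{j})=\delta_{ij}$ is equivalent to the system $D_{k}(y_{j})=b_{kj}$ for all $k\in\mathbf{I}$. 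Thus it suffices to ``integrate'' the $j$-th column of $B$, that is, to find $y_{j}\in\mathcal{O}_{1}$ whose partials $D_{k}(y_{j})$ are the prescribed $b_{kj}$.

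Existence of such a primitive is the heart of the matter, and I would establish it by induction along the standard filtration. Solvability of $D_{k}(y)=b_{kj}$ requires the closedness relations $D_{l}(b_{kj})=(-1)^{{\rm p}(D_{k}){\rm p}(D_{l})}D_{k}(b_{lj})$, and these follow precisely from the super-commutativity $[\phi(D_{i}),\phi(D_{j})]=0$ (equivalently, the dual coframe is closed). The leading ($\mathcal{O}_{[1]}$) part of $y_{j}$ is then forced to be $\sum_{k}(C^{-1})_{kj}x_{k}$, after which one solves for the higher homogeneous parts degree by degree. Here lies the main obstacle and the one genuinely characteristic-$p$ point: in the divided power algebra the Poincar\'e lemma fails in top degree (a class of $x^{(\pi)}$-type terms need not be a total derivative), so closedness alone does not yield a primitive. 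I expect to remove this obstruction by invoking ${\rm div}(\phi(D_{i}))=0$, the structural constraint coming from $\mathfrak{g}\subset S'$, which annihilates the offending top-degree component; the non-special frame $(1+x^{(\pi)})D$ in one even variable, whose dual coordinate does not exist, shows that this hypothesis cannot be dropped.

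Finally the bookkeeping. Because $\phi$ is even, $C$ and hence $C^{-1}$ respect the $\mathbb{Z}_{2}$-grading, so the leading term of $y_{j}$ involves only those $x_{k}$ with ${\rm p}(x_{k})={\rm p}(D_{j})$; the equations $\phi(D_{i})(y_{j})=\delta_{ij}$ then force $y_{j}$ to be $\mathbb{Z}_{2}$-homogeneous of parity $\mu(j)={\rm p}(D_{j})$, as claimed. Once the $y_{j}$ are constructed, the matrix $\big(\phi(D_{i})(y_{j})\big)_{i,j\in\mathbf{I}}=(\delta_{ij})$ is the identity $I_{2m}$, and its invertibility is immediate.
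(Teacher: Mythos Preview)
Your framework is coherent up to the integration step, but that step is the whole difficulty and you have not actually carried it out. Reducing to $D_{k}(y_{j})=b_{kj}$ is fine, and closedness of the columns of $B$ does follow from the super-commutativity of the frame. However, in the divided-power setting the passage from closedness to exactness is a genuine theorem, not a bookkeeping exercise: the obstruction is not only a single top-degree class but an entire subspace (monomials with some $\alpha_{i}=p^{t_{i}}-1$), and you only \emph{expect} that ${\rm div}(\phi(D_{i}))=0$ kills it. You give no mechanism linking the divergence of the $\phi(D_{i})$ (a trace-type condition on the rows of $A$) to the integrability of the columns of $A^{-1}$; making that link precise is not routine and is essentially as hard as the lemma itself. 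As written, the proof has a gap exactly at its ``heart of the matter''.

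The paper sidesteps the integration problem entirely by reading the $y_{j}$ off from the Lie-algebra structure rather than manufacturing them by antidifferentiation. For $j\neq 1'$ one expands $\phi\big({\rm T_{H}}(x_{1}x_{j})\big)=\sum_{l}a_{jl}\,\phi(D_{l})$ in the $\mathcal{O}$-basis $\{\phi(D_{l})\}$ (which exists by Lemma~\ref{zz5}(2)), and computes $\phi\big([D_{i},{\rm T_{H}}(x_{1}x_{j})]\big)$ in two ways: once as $\sum_{l}\phi(D_{i})(a_{jl})\,\phi(D_{l})$, and once using $[D_{i},{\rm T_{H}}(x_{1}x_{j})]=\delta_{ij}D_{1'}+(-1)^{\mu(j)}\delta_{i1}D_{j'}$. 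Comparing the $\phi(D_{1'})$-coefficients gives $\phi(D_{i})(a_{j1'})=\delta_{ij}$, so $y_{j}:=a_{j1'}$ works; the remaining index $j=1'$ is handled with ${\rm T_{H}}(x_{1}x_{1'}-x_{2}x_{2'})$. This is a two-line bracket calculation using elements already known to lie in $\mathfrak{g}$, and it never touches the Poincar\'e lemma. If you want to salvage your approach, you would need to prove the characteristic-$p$ integrability statement outright; otherwise, the bracket argument is both shorter and complete.
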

\begin{proof}
 In view of Lemma \ref{zz4}(2), since $\{D_{1},\ldots,D_{2m}\}$ is an $\mathcal{O}$-basis of $W,$  so is $\{\phi(D_{1}),\ldots,\phi(D_{2m})\}.$  For ${\rm{T_{H}}}(x_{1}x_{j})\in \mathfrak{g},$ where $j\in
 \mathbf{I}
\backslash {\{1^{'}\}},$ we may assume that
$$\phi\big({\rm{T _{H}}}(x_{1}x_{j})\big
)=\sum^{2m}_{l=1}a_{jl}\phi(D_{l}),\,\ a_{jl}\in \mathcal{O}.$$ In
particular, $\phi$ preserves the filtration, and thanks to Lemma
\ref{zz4}, this forces $a_{jl}\in \mathcal{O}_{1}.$ We can obtain
that
\begin{eqnarray}
\phi\big([D_{i},{\rm{T_{H}}}(x_{1}x_{j})]\big)&=&\big[\phi(D_{i}),\sum_{l=1}^{2m}a_{jl}\phi(D_{l})\big]\\
&=&\sum_{l=1}^{2m}\big(\phi(D_{i})(a_{jl})\big)\phi(D_{l})\label{e1}.
 \end{eqnarray}
On the other hand,
 since $[D_{i},{\rm{T_{H}}}(f)]={\rm{T_{H}}}(D_{i}(f))$, whence
\begin{eqnarray}\label{e2}
&\phi\big([D_{i},{\rm{T_{H}}}(x_{1}x_{j})]\big)=\delta_{ij}\phi(D_{1^{'}})+(-1)^{\mu(j)}\delta_{i1}\phi(D_{j^{'}}).
\end{eqnarray}
Comparing (\ref{e1}) with (\ref{e2}), we have
$\phi(D_{i})(a_{j1'})=\delta_{ij}.$ Put $y_{j}:=a_{j1'}$ for all
$j\in \mathbf{I}\backslash \{1'\}.$ When $j\in \mathbf{I}\backslash
\{1'\},$ we know that $\phi(D_{i})(y_{j})=\delta_{ij},\,\ y_{j}\in
\mathcal{O}_{1}$ and
${\rm{p}}(y_{j})={\rm{p}}(a_{j1'})=\mu(j')+\mu(1')=\mu(j).$

Evidently, the element
 ${\rm{T_{H}}}(x_{1}x_{1'}-x_{2}x_{2'})$ lies in
$\mathfrak{g}.$ Let
$$\phi\big({\rm{T_{H}}}(x_{1}x_{1'}-x_{2}x_{2'})\big)=\sum^{2m}_{l=1}a_{l}\phi(D_{l}).$$
Then
\begin{eqnarray}
\phi\big([D_{i},{\rm{T_{H}}}(x_{1}x_{1'}-x_{2}x_{2'})]\big)&=&\sum_{l=1}^{2m}\big(\phi(D_{i})(a_{l})\big)\phi(D_{l}).
 \end{eqnarray}
On the other hand,
\begin{eqnarray}
\phi\big([D_{i},{\rm{T_{H}}}(x_{1}x_{1'}-x_{2}x_{2'})]\big)&=&\phi(D_{1'})-\phi(D_{1})-\phi(D_{2'})+\phi(D_{2}).
 \end{eqnarray}
Similar to the case above, we put $y_{j}=a_{1'}.$ Now
 ${\rm{p}}(a_{1'})=\mu(1')$
and $\phi(D_{i})(y_{j})=\delta_{i1'},$ as desired.
\end{proof}

Our main intent is to describe the connections between
$\rm{Aut}\mathfrak{g}$ and $\rm{Aut}(\mathcal{O}:\mathfrak{g}).$
Recall that
\begin{eqnarray*}
\Phi:{\rm{Aut}}(\mathcal{O}:\mathfrak{g})&\longrightarrow& {\rm{Aut}}\mathfrak{g}\\
\sigma&\longmapsto&\widetilde{\sigma}\mid_{\mathfrak{g}}
\end{eqnarray*}
where $\widetilde{\sigma}(D)=\sigma D\sigma^{-1},$ for all $D\in
\mathfrak{g}.$
\begin{theorem}\label{zz7}
$\Phi$ is an isomorphism of groups.
\end{theorem}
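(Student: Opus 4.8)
The plan is to establish in turn that $\Phi$ is a well-defined group homomorphism, that it is injective, and that it is surjective, the last being where the substantive work lies. Well-definedness and the homomorphism property are formal: for $\sigma \in \mathrm{Aut}(\mathcal{O}\!:\!\mathfrak{g})$ the conjugation $\widetilde{\sigma}$ is a linear automorphism of $W \supset \mathfrak{g}$ with $\widetilde{\sigma}(\mathfrak{g}) \subset \mathfrak{g}$, and since $\dim\mathfrak{g} < \infty$ this inclusion is an equality, so $\widetilde{\sigma}\mid_{\mathfrak{g}} \in \mathrm{Aut}\mathfrak{g}$; moreover $\widetilde{\sigma\tau}(D) = (\sigma\tau)D(\sigma\tau)^{-1} = \widetilde{\sigma}\big(\widetilde{\tau}(D)\big)$ for all $D \in \mathfrak{g}$ gives $\Phi(\sigma\tau) = \Phi(\sigma)\Phi(\tau)$.

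For injectivity, assume $\Phi(\sigma) = \mathrm{id}_{\mathfrak{g}}$, so $\sigma D = D\sigma$ for every $D \in \mathfrak{g}$; in particular $\sigma D_i = D_i\sigma$ for all $i \in \mathbf{I}$, since $\mathfrak{g}_{[-1]} = W_{[-1]}$ contains each $D_i$. Applying this to a generator $x_j$ gives $D_i\big(\sigma(x_j) - x_j\big) = \sigma(\delta_{ij}) - \delta_{ij} = 0$ for all $i$. By Lemma \ref{zz1} the automorphism $\sigma$ preserves the standard filtration, hence $\sigma(x_j) - x_j \in \mathcal{O}_1$; being annihilated by every $D_i$ it is a scalar and therefore $0$. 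Thus $\sigma$ fixes all generators and $\sigma = \mathrm{id}$, so $\Phi$ is injective.

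Surjectivity is the heart of the matter. Given $\phi \in \mathrm{Aut}\mathfrak{g}$, I would first use Lemma \ref{zz6} to produce $y_j \in \mathcal{O}_1$ with $\mathrm{p}(y_j) = \mu(j)$ and $\phi(D_i)(y_j) = \delta_{ij}$, and define $\sigma\colon \mathcal{O} \to \mathcal{O}$ to be the algebra endomorphism with $\sigma(x_j) = y_j$. One then checks that $\sigma$ is a parity-preserving automorphism: the defining truncation and exterior relations of $\mathcal{O}$ are respected because each $y_j$ lies in the nilpotent augmentation ideal $\mathcal{O}_1$, and $\sigma$ is bijective because the degree-one parts of the $y_j$ are independent. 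This independence is read off from $\phi(D_i)(y_j) = \delta_{ij}$: writing the leading part of $\phi(D_i)$ as $\sum_k c_{ik}D_k$ with $(c_{ik})$ invertible by Lemma \ref{zz5}, and the degree-one part of $y_j$ as $\sum_l b_{jl}x_l$, comparison of the degree-zero components yields $\sum_k c_{ik}b_{jk} = \delta_{ij}$, forcing $(b_{jl})$ invertible. I expect the verification that $x_j \mapsto y_j$ genuinely defines an automorphism of the divided-power/exterior algebra $\mathcal{O}$ to be the main obstacle.

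It remains to match $\widetilde{\sigma}$ with $\phi$. The derivation $\sigma^{-1}\phi(D_i)\sigma$ sends each $x_j$ to $\sigma^{-1}\big(\phi(D_i)(y_j)\big) = \delta_{ij} = D_i(x_j)$, hence equals $D_i$; thus $\widetilde{\sigma}(D_i) = \phi(D_i)$ for all $i$, i.e.\ $\widetilde{\sigma}$ and $\phi$ coincide on $\mathfrak{g}_{[-1]}$. Since $\sigma$ preserves the filtration of $\mathcal{O}$, the automorphism $\widetilde{\sigma}$ of $W$ preserves the standard filtration, as does $\phi$ by Lemma \ref{zz1}. I would then argue by induction on the grading degree, exactly as in Lemma \ref{zz5}(3): for $D \in \mathfrak{g}_{[i]}$ with $i \geq 0$, the inductive hypothesis applied to $[D,D_j] \in \mathfrak{g}_{[i-1]}$ gives $\big[\widetilde{\sigma}(D) - \phi(D),\, \phi(D_j)\big] = \widetilde{\sigma}[D,D_j] - \phi[D,D_j] = 0$ for all $j$, and since $\{\mathrm{pr}_{[-1]}\phi(D_j)\}$ is a basis of $W_{[-1]}$ by Lemma \ref{zz5}(1),(2), the transitivity of $W$ forces $\widetilde{\sigma}(D) = \phi(D)$. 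Hence $\widetilde{\sigma}\mid_{\mathfrak{g}} = \phi$, so $\widetilde{\sigma}(\mathfrak{g}) = \phi(\mathfrak{g}) = \mathfrak{g}$; this shows $\sigma \in \mathrm{Aut}(\mathcal{O}\!:\!\mathfrak{g})$ with $\Phi(\sigma) = \phi$, and simultaneously supplies the admissibility of $\sigma$, which is the one point at which the argument could otherwise stall.
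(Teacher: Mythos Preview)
Your proof is correct and follows essentially the same route as the paper: injectivity from the action of $D_i\in\mathfrak{g}_{[-1]}$ on generators, construction of $\sigma$ via Lemma~\ref{zz6}, invertibility of the linear part of $(y_j)$, and identification $\widetilde{\sigma}\mid_{\mathfrak{g}}=\phi$ by a transitivity/induction argument. Your explicit induction in $W$ for the last step is in fact a touch more careful than the paper's bare citation of Lemma~\ref{zz5}(3), since at that stage one does not yet know that $\widetilde{\sigma}$ stabilises $\mathfrak{g}$; otherwise the arguments coincide.
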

\begin{proof}

Clearly,
$$\Phi:{\rm{Aut}}(\mathcal{O}:\mathfrak{g})\longrightarrow
{\rm{Aut}}\mathfrak{g},\,\
 \sigma \longmapsto
\tilde{\sigma}\mid_{\mathfrak{g}}$$ is a homomorphism of groups,
where $\tilde{\sigma}(D)=\sigma D\sigma^{-1}£¬$ for all $D\in
\mathfrak{g}.$ Therefore, we shall merely prove $\Phi$ is bijective.

   Let us first show that $\Phi$ is injective, that is $\rm{ker}(\Phi)=\{1_{\mathcal{O}}\}.$
   To that aim, let
   $\sigma\in \rm{Aut}(\mathcal{O}:\mathfrak{g})$ such that
   $\tilde{\sigma}\mid_{\mathfrak{g}}=1_{\mathfrak{g}}.$  Let ${\rm{T_{H}}}(x_{j})\in
   \mathfrak{g},\,\ j\in \mathbf{I}.$
    Since $\widetilde{\sigma}\mid_{\mathfrak{g}}=1_{\mathfrak{g}}$ and the following
    equations hold:
   $${\rm{T_{H}}}(x_{j})(x_{k})=\sigma\big({\rm{T_{H}}}(x_{j})(x_{k})\big)={\rm{T_{H}}}(x_{j})\big(\sigma(x_{k})\big).$$
   We conclude that $x_{k}-\sigma(x_{k})\in \mathbb{F},$ $k\in \mathbf{I}.$
   On the other hand, Lemma \ref{zz1}(1) ensures that $\sigma(x_{k})\in
   \mathcal{O}_{1}.$ It follows that $\sigma(x_{k})-x_{k}\in \mathcal{O}_{1}\cap \mathbb{F},$ so $\sigma(x_{k})=x_{k}$ for all $k\in \mathbf{I}.$
   As $\mathcal{O}$ is generated by $x_{r},$ $r\in \mathbf{I},$ this implies
   $\sigma=1_{\mathcal{O}}.$

     The remaining task is to show that $\Phi$ is surjective. Let $\phi\in {\rm{Aut}}\mathfrak{g}.$ By the preceding Lemma \ref{zz5}, there exist $y_{j}\in
     \mathcal{O},$ with ${\rm{p}}(y_{j})=\mu({j}),$ such that $\phi(D_{i})(y_{j})=\delta_{ij}.$ Suppose $\phi(D_{i})=\sum^{2m}_{j=1}a_{ij}D_{j},\,\ a_{ij}\in \mathcal{O}.$
     Then the matrix $\big(\phi(D_{i})(y_{j})\big)$ is equal to
     $(a_{ij})(D_{i}y_{j})$ and therefore, $$
     (\delta_{ij})=\big(\phi(D_{i})(y_{j})\big)={\rm{pr}}_{[0]}\big(\phi(D_{i})(y_{j})\big)={\rm{pr}}_{[0]}(a_{ij}){\rm{pr}}_{[0]}(D_{i}y_{j}).$$
     It follows that ${\rm{pr}}_{[0]}(D_{i}y_{j})$ is invertible. Define the
     endomorphism of $\mathcal{O}$ such that
     \begin{eqnarray}\label{e3}
     &\sigma(x_{i})=y_{j},\,\ i,j \in \mathbf{I}.
     \end{eqnarray}
     Then $\sigma$ is even. We assert that $\sigma\in {\rm{Aut}}\mathcal{O}.$ In fact, from
     (\ref{e3}), it is easy to see that $\sigma$ preserves the standard
     filtration of $\mathcal{O}$ invariant, that is, $\sigma(\mathcal{O}_{i})\subset
     \mathcal{O}_{i}$ for all $i\geq 0.$ Therefore it induces a linear
     transformation $\sigma_{i}$ of $\mathcal{O}_{i}/\mathcal{O}_{i+1}.$
     Note that the matrix of $\sigma_{1}$ relative to $\mathbb{F}$-basis
     $\{x_{1}+\mathcal{O}_{2},\ldots,x_{2m}+\mathcal{O}_{2}\}$ is just
     $\big({\rm{pr}}_{[0]}(D_{i}y_{j})\big).$ This implies that $\sigma_{1}$ is
     bijective. Proceeding by induction on $i\geq 1,$ we get $\sigma_{i}$ is bijective. Hence $\sigma$ is bijective.  Now
     our assertion
     follows.
     Note that $$\widetilde{\sigma}(D_{i})(y_{j})=(\sigma
     D_{i}\sigma^{-1})(y_{j})=\sigma(D_{i}x_{j})=\delta_{ij}=\phi(D_{i})(y_{j}),$$
     for all $i,j\in \mathbf{I}.$
     Since $\{y_{j}\mid j\in \mathbf{I}\}$ generated $\mathcal{O},$ we conclude that
     $\widetilde{\sigma}(D_{i})=\phi(D_{i}), i\in \mathbf{I}.$ Because of Lemma \ref{zz4}(3), this makes it clear that
     $\widetilde{\sigma}\mid_{\mathfrak{g}}=\phi.$ The Theorem follows at once.
     \end{proof}

Note that in the next theorem we give a more thorough discussion
about the relations between ${\rm{Aut}}(\mathcal{O}:\mathfrak{g})$
and ${\rm{Aut}}\mathfrak{g}$. Namely, $\Phi$ preserves the standard
normal series and the homogeneous automorphism groups of
$\mathfrak{g}$.

 \begin{theorem}\label{zz8}
With the above notation $\Phi,$ the following identities hold:

${\rm{(1)}}\,\ \Phi({\rm{Aut}}_{i}(\mathcal{O}:\mathfrak{g}))=
{\rm{Aut}}_{i}\mathfrak{g} \,\ \mbox{for}\,\ i\geq 0;$

 ${\rm{(2)}}\,\ \Phi({\rm{Aut}}^{*}(\mathcal{O}:\mathfrak{g}))=\rm{Aut}^{*}\mathfrak{g}.$

\end{theorem}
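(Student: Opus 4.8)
The plan is to leverage Theorem \ref{zz7}, which already tells us $\Phi$ is a group isomorphism, and to upgrade this isomorphism so that it respects both the filtration-induced normal series and the $\mathbb{Z}$-grading. For part (1) I would proceed by double inclusion. First I would show $\Phi(\mathrm{Aut}_i(\mathcal{O}:\mathfrak{g}))\subseteq \mathrm{Aut}_i\mathfrak{g}$: take $\sigma\in\mathrm{Aut}_i(\mathcal{O}:\mathfrak{g})$, so $(\sigma-1)(\mathcal{O}_j)\subseteq\mathcal{O}_{i+j}$ for all $j$, and I must verify that $\widetilde\sigma=\Phi(\sigma)$ satisfies $(\widetilde\sigma-1)(\mathfrak{g}_k)\subseteq\mathfrak{g}_{i+k}$. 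The key computation is to track how conjugation $D\mapsto\sigma D\sigma^{-1}$ shifts the filtration degree of a derivation $D\in W_k$: since $D$ maps $\mathcal{O}_{j+1}\to\mathcal{O}_{j-k+1}$ (roughly) and both $\sigma,\sigma^{-1}$ move elements down the filtration by at least $i$ when differing from the identity, one gets $(\widetilde\sigma-1)(D)\in W_{k+i}$. This is essentially a degree-bookkeeping argument using the definition of the filtration on $\mathcal{O}$ and on $W$.

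For the reverse inclusion $\mathrm{Aut}_i\mathfrak{g}\subseteq\Phi(\mathrm{Aut}_i(\mathcal{O}:\mathfrak{g}))$, I would start from $\phi\in\mathrm{Aut}_i\mathfrak{g}$, use surjectivity of $\Phi$ from Theorem \ref{zz7} to write $\phi=\Phi(\sigma)=\widetilde\sigma$ for a unique $\sigma\in\mathrm{Aut}(\mathcal{O}:\mathfrak{g})$, and then show $\sigma$ actually lies in $\mathrm{Aut}_i\mathcal{O}$. The natural route is to revisit the construction of $\sigma$ in the proof of Theorem \ref{zz7}: there $\sigma$ was defined by $\sigma(x_i)=y_i$ where the $y_i\in\mathcal{O}_1$ satisfy $\phi(D_i)(y_j)=\delta_{ij}$. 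The condition $\phi\in\mathrm{Aut}_i\mathfrak{g}$ means $(\phi-1)(D_i)\in\mathfrak{g}_{i-1}$, and I would translate this into the statement that $y_j-x_j\in\mathcal{O}_{i+1}$, which is exactly the condition $(\sigma-1)(\mathcal{O}_1)\subseteq\mathcal{O}_{i+1}$; since $\mathcal{O}$ is generated by the $x_j$ and $\sigma$ is an algebra automorphism, this propagates to $(\sigma-1)(\mathcal{O}_k)\subseteq\mathcal{O}_{i+k}$ for all $k$, giving $\sigma\in\mathrm{Aut}_i\mathcal{O}$.

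Part (2) follows the same double-inclusion pattern but with the $\mathbb{Z}$-grading in place of the filtration, and is in fact cleaner because I can exploit the relation between the graded pieces and the successive filtration quotients. For $\sigma\in\mathrm{Aut}^*(\mathcal{O}:\mathfrak{g})$ one has $\sigma(\mathcal{O}_{[j]})\subseteq\mathcal{O}_{[j]}$; conjugating, $\widetilde\sigma$ preserves each $W_{[k]}$ by the same degree-counting as above (now with exact degrees rather than inequalities), hence preserves each $\mathfrak{g}_{[k]}=\mathfrak{g}\cap W_{[k]}$, so $\Phi(\sigma)\in\mathrm{Aut}^*\mathfrak{g}$. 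Conversely, given $\phi\in\mathrm{Aut}^*\mathfrak{g}$, write $\phi=\widetilde\sigma$ and inspect the construction: now $\phi(D_i)\in\mathfrak{g}_{[-1]}$ forces the $y_j$ to be homogeneous of degree $1$, i.e. $y_j\in\mathcal{O}_{[1]}$, so $\sigma$ is a graded automorphism. Throughout both parts the one uniqueness subtlety to handle carefully is invoking Lemma \ref{zz5}(3) (transitivity) to conclude that an automorphism of $\mathfrak{g}$ is determined by its restriction to $\mathfrak{g}_{[-1]}$, which is what lets me reduce every claim about $\sigma$ on all of $\mathcal{O}$ to a claim about its action on the degree-$1$ generators.

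I expect the main obstacle to be the reverse inclusion in part (1): the precise translation between the filtration condition on $\phi$ at the level of $\mathfrak{g}$ and the filtration condition on $\sigma$ at the level of $\mathcal{O}$. The forward inclusions are pure degree bookkeeping, but the reverse direction requires one to argue that the \emph{a priori} weaker information $(\phi-1)(\mathfrak{g}_j)\subseteq\mathfrak{g}_{i+j}$ pins down the filtration degree of $y_j-x_j$ exactly, and then to bootstrap from the generators $x_j$ to all of $\mathcal{O}$ using that $\sigma$ is multiplicative; care is needed because the lowest graded pieces $\mathfrak{g}_{[-1]}$ and the top pieces (cf. Lemma \ref{zz2}(3),(5)) behave specially, so I would verify that the degree-shift estimates are not disrupted by these boundary gradings.
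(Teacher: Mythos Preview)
Your proposal is correct and follows the same overall architecture as the paper: double inclusion for both parts, with the forward inclusion handled by pure degree bookkeeping on $\widetilde\sigma(fD_j)=\sigma(f)\widetilde\sigma(D_j)$, and the reverse inclusion reduced to showing $\sigma(x_j)\equiv x_j\pmod{\mathcal{O}_{i+1}}$ and then propagating by induction on $|\alpha|+|u|$.

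The one genuine difference is in how you establish $\sigma(x_j)\equiv x_j\pmod{\mathcal{O}_{i+1}}$ for the reverse inclusion. You go back to the construction in Theorem~\ref{zz7}, use $\sigma(x_j)=y_j$ with $\phi(D_r)(y_j)=\delta_{rj}$, and combine this with $(\phi-1)(D_r)\in\mathfrak{g}_{i-1}$ to get $D_r(y_j-x_j)\in\mathcal{O}_i$ for all $r$, hence $y_j-x_j\in\mathcal{O}_{i+1}$. The paper instead works intrinsically inside $\mathfrak{g}$: it applies $\widetilde\sigma$ to the degree-zero element ${\rm T_H}(x_{k'}x_j)$ (for $k\neq j'$), expands via $\widetilde\sigma(fD_r)=\sigma(f)\widetilde\sigma(D_r)$, and reads off $(\sigma(x_j)-x_j)D_k+(\sigma(x_{k'})-x_{k'})D_{j'}\equiv 0\pmod{W_i}$, whence $\sigma(x_j)-x_j\in\mathcal{O}_{i+1}$ since $k\neq j'$. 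Your route is arguably cleaner conceptually (it reuses the generator description of $\sigma$ already in hand), while the paper's route avoids revisiting the proof of Theorem~\ref{zz7} and the implicit uniqueness of the $y_j$. Both reach the same intermediate conclusion, and from there the arguments coincide.
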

\begin{proof}
$(1)$ \lq \lq $ \subset $\rq \rq   Let $\sigma\in
{\rm{Aut}}_{i}(\mathcal{O}:\mathfrak{g}).$ Then
     $\sigma^{- 1}\in {\rm{Aut}}_{i}(\mathcal{O}:\mathfrak{g}).$ For $k\in \mathbb{N}$ and $f\in
     \mathcal{O}_{k},$ we may assume that $\sigma^{-1}(f)=f+f', f'\in
     \mathcal{O}_{i+k},$ and $$\sigma(D_{j}(f))=D_{j}(f)+f'', f''\in
     \mathcal{O}_{i+k-1}.$$ According to Lemma \ref{zz1}, we have $\sigma(D_{j}(f'))\in
     \mathcal{O}_{i+k-1}.$ Note that
     $$\widetilde{\sigma}(D_{j})(f)=D_{j}(f)+f''+\sigma(D_{j}(f')).$$
     We obtain that $\widetilde{\sigma}(D_{j})(f)\equiv
     D_{j}(f)\pmod{\mathcal{O}_{i+k-1}}.$ This implies that
     $$\widetilde{\sigma}(D_{j})\equiv D_{j}\pmod{ W_{i-1}}\quad \mbox{for
     all}\,\
      j\in \mathbf{I}.$$
     A straightforward calculation shows
     $$ \widetilde{\sigma}(fD_{j})=\sigma(f)\widetilde{\sigma}(D_{j})\quad  \mbox{for all}
     \,\ j\in \mathbf{I}.$$
     Then it is easy to see that $$\widetilde{\sigma}(fD_{j})\equiv
     fD_{j}\pmod{W_{k-1+i}}.$$
     Therefore $\widetilde{\sigma}\in {\rm{Aut}}_{i}W\cap {\rm{Aut}}\mathfrak{g}\subset
     {\rm{Aut}}_{i}\mathfrak{g}.$
     Hence $\Phi({\rm{Aut}}_{i}(\mathcal{O}:\mathfrak{g}))\subset {\rm{Aut}}_{i}\mathfrak{g}.$

      \lq \lq $\supset$\rq \rq   Let $\widetilde{\sigma}\in {\rm{Aut}}_{i}\mathfrak{g}, i\geq 0$ and set
         $\sigma:=\Phi^{-1}(\widetilde{\sigma}).$
         Given $j\in \mathbf{I},$ pick $k\in \mathbf{I}\setminus
         \{j'\}.$ We have
         \begin{eqnarray}
         {\rm{T_{H}}}(x_{k'}x_{j})=(-1)^{\mu(k')+\mu(k')\mu(j)}x_{j}D_{k}+(-1)^{\mu(j)}x_{k'}D_{j'}.\label{e4}
         \end{eqnarray}
         Then
         \begin{eqnarray}
         (-1)^{\mu(k')+\mu(k')\mu(j)}\sigma(x_{j})(\widetilde{\sigma}(D_{k}))+(-1)^{\mu(j)}\sigma(x_{k'})(\widetilde{\sigma}(D_{j'}))
         =\widetilde{\sigma}({\rm{T_{H}}}(x_{k'}x_{j}))\\
         \equiv(-1)^{\mu(k')+\mu(k')\mu(j)}x_{j}D_{k}+(-1)^{\mu(j)}x_{k'}D_{j^{'}}\pmod{
         \mathfrak{g}_{i}}.\label{e5}
         \end{eqnarray}
         Notice that $\widetilde{\sigma} \in {\rm{Aut}}_{i}\mathfrak{g}$ and $W_{[-1]}=\mathfrak{g}_{[-1]}.$
         We have
         \begin{eqnarray}\widetilde{\sigma}(D_{k})=D_{k}+E_{1},\,\ \widetilde{\sigma}(D_{j'})=D_{j'}+E_{2},\mbox{where}\,\ E_{1},E_{2}\in
         \mathfrak{g}_{i-1}.\end{eqnarray}
         By Lemma \ref{zz1}, it is easily seen that
         $\sigma(x_{j})E_{1}, \sigma(x_{k'})E_{2} \in W_{i}.$
         Thus from (\ref{e4}) and (\ref{e5}), we obtain that
         $$(-1)^{\mu(k')+\mu(k')\mu(j)}(\sigma(x_{j})-x_{j})D_{k}+(-1)^{\mu(j)}(\sigma(x_{k'})-x_{k'})D_{j'}\equiv
         0\pmod{W_{i}}.$$
         Since $k'\neq j,$ we obtain that $\sigma(x_{j})\equiv
         x_{j}\pmod {\mathcal{O}_{i+1}}.$
         Now using induction on $|\alpha|+|u|,$ one
         may prove that $$\sigma(x^{(\alpha)}x^{u})\equiv
         x^{(\alpha)}x^{u}\pmod{\mathcal{O}_{\mid\alpha\mid+\mid u\mid+i}},$$
         This implies $\sigma\in {\rm{Aut}}_{i}\mathcal{O}$ and therefore $\sigma\in
         {\rm{Aut}}_{i}(\mathcal{O}:\mathfrak{g}).$
         Hence $$\Phi({\rm{Aut}}_{i}(\mathcal{O}:\mathfrak{g}))\supset {\rm{Aut}}_{i}\mathfrak{g}.$$\

         (2) An analogous argument with the case above leads to the
         conclusion.
\end{proof}
 Next theorem is a general conclusion for a finite dimensionial
 $\mathbb{Z}$-graded superalgebra $\mathscr{A}$ which satisfies that ${\rm{Aut}}\mathscr{A}$
preserves the standard filtration invariant.

\begin{theorem}\label{zz9}
Suppose $\mathscr{A}=\bigoplus_{i\in{\mathbb{Z}}}\mathscr{A}_{[i]}$
is a finite dimensional $\mathbb{Z}$-graded superalgebra over
$\mathbb{F}$ and the standard filtration of $\mathscr{A}$ is
invariant under ${\rm{Aut}}\mathscr{A}.$ Then
${\rm{Aut}}_{i}\mathscr{A}\,\ (i\geq 1)$ are solvable normal
subgroups of ${\rm{Aut}}\mathscr{A}.$
 \end{theorem}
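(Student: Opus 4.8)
The plan is to exhibit, for ${\rm{Aut}}_{1}\mathscr{A}$, a finite subnormal series with abelian quotients terminating in the trivial group; solvability of every ${\rm{Aut}}_{i}\mathscr{A}$ with $i\geq 1$ will then follow at once, since each such group is a subgroup of the solvable group ${\rm{Aut}}_{1}\mathscr{A}$. The normality of each ${\rm{Aut}}_{i}\mathscr{A}$ in ${\rm{Aut}}\mathscr{A}$ has already been recorded, so only solvability needs an argument.

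First I would use finite dimensionality to bound the grading. Since $\dim_{\mathbb{F}}\mathscr{A}<\infty$, there are integers $r_{0}\leq r_{1}$ with $\mathscr{A}_{[k]}=0$ unless $r_{0}\leq k\leq r_{1}$; consequently $\mathscr{A}_{r_{0}}=\mathscr{A}$ and $\mathscr{A}_{m}=0$ for $m>r_{1}$. Setting $d:=r_{1}-r_{0}$, for any $\sigma\in {\rm{Aut}}_{d+1}\mathscr{A}$ the defining condition with $j=r_{0}$ gives $(\sigma-1)(\mathscr{A})=(\sigma-1)(\mathscr{A}_{r_{0}})\subseteq \mathscr{A}_{d+1+r_{0}}=\mathscr{A}_{r_{1}+1}=0$, so $\sigma=1$. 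Thus the descending chain
$${\rm{Aut}}_{1}\mathscr{A}\supseteq {\rm{Aut}}_{2}\mathscr{A}\supseteq\cdots\supseteq {\rm{Aut}}_{d}\mathscr{A}\supseteq {\rm{Aut}}_{d+1}\mathscr{A}=\{1\}$$
is finite and reaches the identity.

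Next I would show that each quotient ${\rm{Aut}}_{i}\mathscr{A}/{\rm{Aut}}_{i+1}\mathscr{A}$ (for $i\geq 1$) is abelian by passing to leading terms. For $\sigma\in {\rm{Aut}}_{i}\mathscr{A}$ write $s:=\sigma-1$, which carries $\mathscr{A}_{j}$ into $\mathscr{A}_{i+j}$ for every $j$ and hence induces $\mathbb{F}$-linear maps $\bar{s}_{j}:\mathscr{A}_{j}/\mathscr{A}_{j+1}\longrightarrow \mathscr{A}_{i+j}/\mathscr{A}_{i+j+1}$. The assignment $\Psi:\sigma\longmapsto(\bar{s}_{j})_{j\in\mathbb{Z}}$ will be the desired map from ${\rm{Aut}}_{i}\mathscr{A}$ into the additive abelian group $\bigoplus_{j}{\rm{Hom}}_{\mathbb{F}}\big(\mathscr{A}_{j}/\mathscr{A}_{j+1},\mathscr{A}_{i+j}/\mathscr{A}_{i+j+1}\big)$. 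To verify multiplicativity, take $\tau\in {\rm{Aut}}_{i}\mathscr{A}$ with $t:=\tau-1$ and compute $\sigma\tau-1=s+t+st$; here the crucial point is that $st$ sends $\mathscr{A}_{j}$ into $\mathscr{A}_{2i+j}$, and because $i\geq 1$ one has $2i\geq i+1$, so $st(\mathscr{A}_{j})\subseteq \mathscr{A}_{i+1+j}$ and the cross term vanishes modulo the $(i+1)$-shifted filtration. Hence the map induced by $\sigma\tau-1$ on each $\mathscr{A}_{j}/\mathscr{A}_{j+1}$ is $\bar{s}_{j}+\bar{t}_{j}$, so $\Psi$ is a group homomorphism into an abelian group, and its kernel is precisely $\{\sigma:(\sigma-1)(\mathscr{A}_{j})\subseteq \mathscr{A}_{i+1+j}\ \forall j\}={\rm{Aut}}_{i+1}\mathscr{A}$. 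Therefore ${\rm{Aut}}_{i}\mathscr{A}/{\rm{Aut}}_{i+1}\mathscr{A}$ embeds into an abelian group and is abelian.

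Combining the two steps, ${\rm{Aut}}_{1}\mathscr{A}$ admits the finite normal series above with abelian successive quotients and is therefore solvable; every ${\rm{Aut}}_{i}\mathscr{A}$ with $i\geq 1$, being a subgroup of ${\rm{Aut}}_{1}\mathscr{A}$ (indeed $\mathscr{A}_{i+j}\subseteq\mathscr{A}_{1+j}$ forces ${\rm{Aut}}_{i}\mathscr{A}\subseteq {\rm{Aut}}_{1}\mathscr{A}$), is solvable as well. I expect the main obstacle to be the second step: one must check that taking leading terms is genuinely multiplicative, which rests entirely on the inequality $2i\geq i+1$ valid exactly when $i\geq 1$. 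This is also why the hypothesis excludes $i=0$, where ${\rm{Aut}}_{0}\mathscr{A}/{\rm{Aut}}_{1}\mathscr{A}$ carries the degree-preserving linear part and need not be abelian.
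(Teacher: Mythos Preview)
Your proof is correct and follows essentially the same line as the paper's: both show that the chain ${\rm{Aut}}_{1}\mathscr{A}\supseteq{\rm{Aut}}_{2}\mathscr{A}\supseteq\cdots$ has abelian successive quotients and terminates at the identity by finite-dimensionality. The only cosmetic difference is that the paper asserts the commutator inclusion $[{\rm{Aut}}_{i}\mathscr{A},{\rm{Aut}}_{j}\mathscr{A}]\subseteq{\rm{Aut}}_{i+j}\mathscr{A}$ (without proof) to get abelian quotients, whereas you establish abelianness via your explicit homomorphism $\Psi$ into the additive group of graded linear maps; the underlying computation $\sigma\tau-1=s+t+st$ with $st$ shifting by $2i\geq i+1$ is the same in both versions.
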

 \begin{proof}
Since ${\rm{Aut}}\mathscr{A}$ preserves the standard filtration
invariant, we have
$$[{\rm{Aut}}_{i}\mathscr{A},{\rm{Aut}}_{j}\mathscr{A}]\subset
{\rm{Aut}}_{i+j}\mathscr{A},\,\ i,j\geq0.$$ From this we deduce that
the normal series $${\rm{Aut}}_{1}\mathscr{A} >
{\rm{Aut}}_{2}\mathscr{A} > \cdots $$ is abelian (that is,
${\rm{Aut}}_{i}\mathscr{A}/{\rm{Aut}}_{i+1}\mathscr{A}$ are abelian
groups, for all $i\geq 1$) and reaches 0. This shows that
${\rm{Aut}}_{i}\mathscr{A}$ is solvable.
\end{proof}

\end{document}